\newtheorem{theorem}{Theorem}
\newtheorem{example}[theorem]{Example}
\newtheorem{remark}[theorem]{Remark}
\newtheorem{lemma}[theorem]{Lemma}
\newtheorem{proposition}[theorem]{Proposition}
\numberwithin{equation}{section}
\newcommand{\tto}{\twoheadrightarrow}
\newcommand{\C}{\ensuremath{\mathbb C}\xspace}
\renewcommand{\phi}{\varphi}
\renewcommand{\leq}{\leqslant}
\renewcommand{\geq}{\geqslant}
\begin{document}
\title[Simple Virasoro modules]{Simple Virasoro modules which are locally finite over a positive part}
\author{Volodymyr Mazorchuk and Kaiming Zhao}
\date{\today}

\begin{abstract}
We propose a very general construction of simple Virasoro modules generalizing and including both highest weight
and Whittaker modules. This reduces the problem of classification of simple Virasoro modules which are locally
finite over a positive part to classification of simple modules over a family of finite dimensional solvable
Lie algebras. For one of these algebras all simple modules are classified by R.~Block and we
extend this classification to the next member of the family. As a result we recover many known but also construct
a lot of new simple Virasoro modules. We also propose a revision of the setup for study of Whittaker modules.
\end{abstract}
\maketitle

%
%

\section{Introduction and formulation of the results}\label{s0}

We denote by $\mathbb{N}$ the set of positive integers and by $\mathbb{Z}_+$ the set of all  non-negative integers.
For a Lie algebra $\mathfrak{a}$ we denote by $U(\mathfrak{a})$ the universal enveloping algebra of $\mathfrak{a}$.

Let $\mathfrak{V}$ denote the complex {\em Virasoro algebra}, that is the Lie algebra with basis $\{\mathtt{c},
\mathtt{l}_i:i\in\mathbb{Z}\}$ and the Lie bracket defined (for $i,j\in\mathbb{Z}$) as follows:
\begin{displaymath}
[\mathtt{l}_i,\mathtt{l}_j]=(j-i) \mathtt{l}_{i+j}+\delta_{i,-j}\frac{i^3-i}{12}\mathtt{c};\quad
[\mathtt{l}_i,\mathtt{c}]=0.
\end{displaymath}
The algebra $\mathfrak{V}$ is a very important object both in mathematics and in mathematical physics, see
for example \cite{KR,IK} and references therein. There are two classical families of simple $\mathfrak{V}$-modules:
highest weight modules (completely described in \cite{FF}) and the so-called intermediate series modules.
In \cite{Mt} it is shown that these two families exhaust all simple weight Harish-Chandra modules, that is weight
modules with finite dimensional weight spaces with respect to the Cartan subalgebra spanned by $\mathtt{l}_0$
and $\mathtt{c}$. In \cite{MZ1} it is even shown that the above modules exhaust all simple weight modules admitting
a nonzero finite dimensional weight space.

Various other families of simple $\mathfrak{V}$-modules were studied in \cite{Zh,OW,LGZ,FJK,Ya,GLZ,OW2}. These include
some simple weight modules with infinite dimensional weight spaces, various versions of Whittaker modules and some
other modules constructed using different tricks. Observe that simple highest weight modules and (all versions of)
Whittaker module have something in common: in both classes the action of a Lie subalgebra of $\mathfrak{V}$
generated by all elements $\mathtt{l}_i$, where $i$ is big enough, is locally nilpotent. The class $\mathcal{X}$
of simple $\mathfrak{V}$-modules described by the latter property is the principal object of study in the present
paper. Now we briefly describe the  main results.

Denote by $\mathfrak{V}_+$ the Lie subalgebra of $\mathfrak{V}$ spanned by all $\mathtt{l}_i$ with $i\geq 0$.
Given $N\in \mathfrak{V}_+\text{-}\mathrm{mod}$ and $\theta\in\mathbb{C}$, consider the corresponding induced module
$\mathrm{Ind}(N):=U(\mathfrak{V})\otimes_{U(\mathfrak{V}_+)}N$ and denote by $\mathrm{Ind}_{\theta}(N)$
the module $\mathrm{Ind}(N)/(\mathtt{c}-\theta)\mathrm{Ind}(N)$. The key result of this paper is:

\begin{theorem}\label{thmmain}
Assume that $N\in \mathfrak{V}_+\text{-}\mathrm{mod}$ is simple and such that there exists $k\in\mathbb{N}$
satisfying the following two conditions:
\begin{enumerate}[$($a$)$]
\item\label{thmmain.1} $\mathtt{l}_k$ acts injectively on $N$;
\item\label{thmmain.2} $\mathtt{l}_iN=0$ for all $i>k$.
\end{enumerate}
Then for any $\theta\in\mathbb{C}$ the $\mathfrak{V}$-module $\mathrm{Ind}_{\theta}(N)$ is simple.
\end{theorem}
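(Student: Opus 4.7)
I view $M := \mathrm{Ind}_\theta(N)$ as $U(\mathfrak{V}_-) \otimes N$ as a vector space via PBW, where $\mathfrak{V}_-$ denotes the span of $\{\mathtt{l}_{-i} : i \geq 1\}$; this identification is well-defined since $\mathtt{c}$ acts on the simple module $N$ as a scalar which, for the quotient not to vanish, must equal $\theta$. Equip $M$ with the increasing filtration $M_{\leq d}$ spanned by PBW monomials $\mathtt{l}_{-I}\otimes N$ with $|I| := i_1 + \cdots + i_r \leq d$. Since $U(\mathfrak{V}_-) \cdot (1 \otimes N) = M$ by PBW and $U(\mathfrak{V}_+) \cdot (1 \otimes n) = 1 \otimes N$ for any nonzero $n \in N$ by simplicity of $N$ as a $\mathfrak{V}_+$-module, proving simplicity of $M$ reduces to showing that for every nonzero $v \in M$ the $U(\mathfrak{V}_+)$-orbit of $v$ meets $1 \otimes N$ nontrivially.

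This I plan to prove by induction on the filtration degree, the crux being the following degree-lowering step: if $v \neq 0$ has filtration degree $d \geq 1$, then there is $X \in U(\mathfrak{V}_+)$ with $Xv \in M_{\leq d-1}$ and $Xv \neq 0$. Write the degree-$d$ component of $v$ as $\sum_{|I| = d} \mathtt{l}_{-I} \otimes n_I$ (not all zero), set $m := \min\{\max I : |I| = d,\ n_I \neq 0\}$, and choose $I_0$ realizing this minimum with maximal possible length $\ell(I_0)$. Take $X = \mathtt{l}_{m+k}$. Because $m+k > k$, $\mathtt{l}_{m+k}$ annihilates $N$, so $Xv = \sum_I [\mathtt{l}_{m+k}, \mathtt{l}_{-I}] \otimes n_I$. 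Using the derivation formula, the identity $[\mathtt{l}_{m+k}, \mathtt{l}_{-m}] = -(2m+k)\mathtt{l}_k$, and the relation $\mathtt{l}_j N = 0$ for $j > k$, a PBW expansion shows that every summand lies in $M_{\leq |I|-m}$ and that the coefficient of $\mathtt{l}_{-(I_0 \setminus \{m\})} \otimes \mathtt{l}_k n_{I_0}$ in the degree-$(d-m)$ part of $Xv$ equals $-(2m+k)\mu$, where $\mu$ is the multiplicity of $m$ in $I_0$. Injectivity of $\mathtt{l}_k$ on $N$ yields $\mathtt{l}_k n_{I_0} \neq 0$, so this contribution is nonzero, and $Xv$ has filtration degree at most $d - m < d$ while being nonzero.

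Iterating the degree-lowering step finitely many times brings any nonzero $v$ into $1 \otimes N$ inside $U(\mathfrak{V}_+) \cdot v$, and the remarks of the first paragraph then close the argument: simplicity of $N$ yields $1 \otimes N \subseteq U(\mathfrak{V}) \cdot v$, and the PBW equality $U(\mathfrak{V}_-) \cdot (1 \otimes N) = M$ gives $U(\mathfrak{V}) \cdot v = M$.

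The principal technical obstacle is the no-cancellation claim at the $\mathtt{l}_{-(I_0 \setminus \{m\})}$-coefficient. Contributions to this coefficient are indexed by pairs $(I, T)$ with $T$ a sub-multiset of $I$ satisfying $\sum T = m$ and $I \setminus T = I_0 \setminus \{m\}$, each arising from a sequence of commutator absorptions that converts the derivation residue $\mathtt{l}_{m+k-i_s}$ into $\mathtt{l}_k$. The extremal choice of $I_0$ (smallest $\max$, then largest length) forces any such pair other than $(I_0, \{m\})$ to satisfy $\ell(T) \geq 2$, whence $\ell(I) = \ell(I_0\setminus\{m\}) + \ell(T) > \ell(I_0)$, contradicting the maximality of $\ell(I_0)$. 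This isolates the single contribution from $(I_0,\{m\})$, and together with injectivity of $\mathtt{l}_k$ completes the degree-lowering step.
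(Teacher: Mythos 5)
Your proof is correct, and it follows the same overall strategy as the paper's: identify $\mathrm{Ind}_\theta(N)$ with $U(\mathfrak{V}_-)\otimes N$ via PBW, use the weight $\sum_s s\,i_s$ of a monomial $\mathtt{l}^{\mathbf i}$ as the governing invariant, and show that from any nonzero $v$ of positive top weight one can reach, via a single $\mathtt{l}_{k+a}\in\mathfrak{V}_+^{(k)}$, a nonzero vector of strictly smaller top weight. Where you genuinely diverge from the paper is in the choice of the extremal monomial and in the no-cancellation argument. The paper fixes a three-tier total order $\prec$ (weight, then number of factors, then reverse lexicographic), takes the $\prec$-leading term $\mathbf j$ of $v$, applies $\mathtt{l}_{k+p}$ with $p$ the \emph{smallest} index occurring in $\mathbf j$, and checks that $\mathbf j-\varepsilon_p$ survives by a case analysis across the three tiers. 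You work only with the top-weight layer, take $m$ to be the \emph{smallest maximal} index occurring there, let $I_0$ realize it with the largest number of factors, and apply $\mathtt{l}_{m+k}$; the weight bookkeeping (total grading is preserved, and the residual positive mode hitting $N$ must be at most $\mathtt{l}_k$) forces the top-weight part of $Xv$ to come only from top-weight $I$ with the residual exactly $\mathtt{l}_k$, so all contributors to the $\mathtt{l}_{-(I_0\setminus\{m\})}$ coefficient are parametrized by pairs $(I,T)$ with $\sum T=m$ and $I\setminus T=I_0\setminus\{m\}$, and your extremal choice of $I_0$ rules out every pair except $(I_0,\{m\})$ in one multiset-length comparison. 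Both selections yield the same conclusion; yours replaces the paper's reverse-lexicographic tier and attendant case split with a single extremal choice and one combinatorial contradiction, at the cost of having to be slightly careful (as you are) that lower-weight components of $v$ and absorption paths that dip into $\mathtt{l}_0$ or $\mathfrak{V}_-$ cannot pollute the weight-$(d-m)$ coefficient being inspected.
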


Theorem~\ref{thmmain} is proved in Section~\ref{s1}. It gives a very general recipe for construction of simple
modules in $\mathcal{X}$. But we go even further. In Theorem~\ref{thmmain2} below we show that every simple module
in $\mathcal{X}$ is either a simple highest weight module or is obtained using the recipe from Theorem~\ref{thmmain}.
To formulate Theorem~\ref{thmmain2} we need to recall some terminology.

Recall that a module $V$ over a Lie algebra $\mathfrak{a}$ is called {\em locally finite}
provided that any $v\in V$ belongs to a finite dimensional $\mathfrak{a}$-submodule. The module is called
{\em locally nilpotent} provided that for any $v\in V$ there exists an $n\in\mathbb{N}$ such that
$a_1a_2\cdots a_n(v)=0$ for all $a_1,a_2,\dots,a_n\in\mathbf{a}$.  For $n\in\mathbb{Z}_+$, denote by
$\mathfrak{V}^{(n)}_+$ the Lie subalgebra of $\mathfrak{V}$ generated by all $\mathtt{l}_i$, $i>n$.

\begin{theorem}\label{thmmain2}
Let $L$ be a simple $\mathfrak{V}$-module. Then the following conditions are equivalent:
\begin{enumerate}[$($a$)$]
\item\label{thmmain2.1} There exists $k\in \mathbb{N}$ such that $L$ is a locally finite
$\mathfrak{V}^{(k)}_+$-module.
\item\label{thmmain2.2} There exists $n\in \mathbb{N}$ such that $L$ is a locally nilpotent
$\mathfrak{V}^{(n)}_+$-module.
\item\label{thmmain2.3} $L$ is a highest weight module or there exists $\theta\in\mathbb{C}$, $k\in \mathbb{N}$
and a simple $N\in \mathfrak{V}_+\text{-}\mathrm{mod}$ such that both conditions \eqref{thmmain.1} and
\eqref{thmmain.2} of  Theorem~\ref{thmmain} are satisfied and $L\cong \mathrm{Ind}_{\theta}(N)$.
\end{enumerate}
\end{theorem}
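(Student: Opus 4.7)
The implication (b)$\Rightarrow$(a) is immediate. For (c)$\Rightarrow$(b), I would work with the PBW decomposition $L=\mathrm{Ind}_\theta(N)=U(\mathfrak{V}_-)\otimes N$ (where $\mathfrak{V}_-=\mathrm{span}\{\mathtt{l}_i:i<0\}$): acting by $\mathtt{l}_m$, $m>k$, on a basis monomial $\mathtt{l}_{-j_1}\cdots\mathtt{l}_{-j_s}\otimes n$ and commuting $\mathtt{l}_m$ past each negative factor produces terms that either reduce the tensor degree or feed into $\mathtt{l}_{m'}N$ with $m'>k$, which vanishes by the hypothesis on $N$; iterating yields local nilpotence over $\mathfrak{V}_+^{(k)}$, and the highest-weight case is analogous with $k=0$.

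For (a)$\Rightarrow$(b), fix $v\in L$ and let $M\subseteq L$ be the finite-dimensional $\mathfrak{V}_+^{(k)}$-submodule generated by $v$. The goal is to show $\mathtt{l}_m|_M$ is nilpotent for every $m\geq 2k+3$. Consider the Jordan decompositions $\mathtt{l}_i|_M=s_i+n_i$ in $\mathfrak{gl}(M)$. By uniqueness of the Jordan decomposition applied to $[\mathtt{l}_i,\mathtt{l}_j]=(j-i)\mathtt{l}_{i+j}$ (possibly after passing to the algebraic hull of the generated Lie algebra in $\mathfrak{gl}(M)$), one obtains $[s_i,s_j]=(j-i)s_{i+j}$, so the $s_i$'s span a Lie subalgebra $\mathfrak{h}\subseteq\mathfrak{gl}(M)$ consisting entirely of semisimple endomorphisms. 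Every element of $\mathfrak{h}$ is $\ad$-diagonalizable, so $\mathfrak{h}$ is toral, hence abelian; combining $[s_i,s_j]=0$ with $[s_i,s_j]=(j-i)s_{i+j}$ forces $s_m=0$ whenever $m=i+j$ for some $i\neq j$ with $i,j>k$, i.e., for $m\geq 2k+3$. Engel's theorem applied to $\mathfrak{V}_+^{(2k+2)}$ acting on $M$ then delivers local nilpotence of $L$ over $\mathfrak{V}_+^{(2k+2)}$.

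For (b)$\Rightarrow$(c), set $L_j:=\{v\in L:\mathtt{l}_i v=0 \text{ for all } i>j\}$. Each $L_j$ is a $\mathfrak{V}_+$-submodule of $L$ by the standard commutator check, and local nilpotence forces $L_n\neq 0$. If $L_0$ contains an $\mathtt{l}_0$-eigenvector, it is a highest weight vector and simplicity of $L$ makes $L$ highest weight. Otherwise, let $k^*\geq 1$ be minimal with $L_{k^*}\neq 0$; minimality forces $\mathtt{l}_{k^*}$ to act injectively on $L_{k^*}$ (otherwise its kernel would lie in $L_{k^*-1}=0$), while $\mathtt{l}_i L_{k^*}=0$ for $i>k^*$ by definition. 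Picking $0\neq v\in L_{k^*}$ and applying Zorn's lemma to $\mathfrak{V}_+$-submodules of $U(\mathfrak{V}_+)v$ not containing $v$, together with the observation that $\ker\mathtt{l}_{k^*}$ is $\mathfrak{V}_+$-stable (so $\mathtt{l}_{k^*}$ cannot degenerate on the resulting simple quotient without contradicting the minimality of $k^*$), should produce a simple $\mathfrak{V}_+$-module $N$ embedded in $L$ and satisfying both conditions of Theorem~\ref{thmmain}. Writing $\theta$ for the scalar action of $\mathtt{c}$, Theorem~\ref{thmmain} gives that $\mathrm{Ind}_\theta(N)$ is simple, and the natural morphism $\mathrm{Ind}_\theta(N)\to L$ induced by $N\hookrightarrow L$ is a nonzero map between simple $\mathfrak{V}$-modules, hence an isomorphism. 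The chief obstacle is realizing $N$ as a genuine $\mathfrak{V}_+$-\emph{submodule} of $L$ rather than merely a simple subquotient, since simple submodules need not a priori exist in an infinite-dimensional $L_{k^*}$; this extraction has to carefully exploit the commutation relations, the injectivity of $\mathtt{l}_{k^*}$, and the simplicity of $L$ to guarantee the needed embedding.
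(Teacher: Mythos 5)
Your outline has the right shape for (c)$\Rightarrow$(b), but the other three implications each contain a genuine gap, and for two of them the paper takes a route that you would need.

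For (b)$\Rightarrow$(a) you write ``immediate,'' but it is not: $\mathfrak{V}^{(n)}_+$ is infinite-dimensional, so even if every product of $m$ elements of $\mathfrak{V}^{(n)}_+$ kills a given $v$, the $\mathfrak{V}^{(n)}_+$-submodule generated by $v$ could still be infinite-dimensional (already $\mathfrak{V}^{(n)}_+ v$ could be). The paper has to use simplicity of $L$: it produces a vector $v$ with $\mathfrak{V}^{(n)}_+v=0$, uses the central character and PBW to span $L$ by monomials $\mathtt{l}^{\mathbf{i}}v$, and then verifies, via an explicit codimension estimate for the annihilator of each $\mathtt{l}^{\mathbf{i}}v$ in $U(\mathfrak{V}^{(n)}_+)$, that each such monomial generates a finite-dimensional $\mathfrak{V}^{(n)}_+$-submodule. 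This same computation is also what the paper uses to finish (a)$\Rightarrow$(b) and (c)$\Rightarrow$(b), so it cannot be waved away.

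For (a)$\Rightarrow$(b) the step $[s_i,s_j]=(j-i)s_{i+j}$ for the Jordan semisimple parts is unjustified and is false in general: the additive Jordan decomposition in $\mathfrak{gl}(M)$ is not compatible with Lie brackets in this way (the semisimple part of $[x,y]$ is generally not $[x_s,y_s]$; e.g.\ with $x=\left(\begin{smallmatrix}1&1\\0&1\end{smallmatrix}\right)$, $y=\left(\begin{smallmatrix}0&0\\1&0\end{smallmatrix}\right)$ one has $[x_s,y_s]=0$ while $[x,y]$ is a nonzero semisimple matrix). Passing to an algebraic hull does not rescue the identity, since $x\mapsto x_s$ is not additive, let alone a Lie algebra map. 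Moreover, even granting nilpotence of each individual $\mathtt{l}_m|_M$ for $m\geq 2k+3$, Engel's theorem requires nilpotence of \emph{every} element of the image, i.e.\ of arbitrary linear combinations, which does not follow without first knowing the image is solvable. The paper avoids all of this by a direct elementary argument: a minimal-support commutator computation shows the kernel of the representation on $M$ must contain some single $\mathtt{l}_i$, whence it contains $\mathfrak{V}^{(2i+1)}_+$ (so in particular the image is solvable, and in fact annihilates $M$ from some point on), and then the mechanism from (b)$\Rightarrow$(a) yields local nilpotence on $L$.

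For (b)$\Rightarrow$(c) you correctly identify $N=\{v\in L:\mathtt{l}_iv=0\text{ for }i>k\}$ with $k$ minimal, but then try to extract a \emph{simple} $\mathfrak{V}_+$-submodule from it before inducing; as you yourself flag, there is no reason an infinite-dimensional $\mathfrak{V}_+$-module should contain a simple submodule (Zorn gives maximal submodules and hence simple \emph{quotients}, not simple submodules), so this strategy stalls. The paper reverses the order: it induces from the whole (not a priori simple) $N$, shows the canonical surjection $\mathrm{Ind}_\theta(N)\to L$ has zero kernel by a leading-term argument (Lemma~\ref{lem21}, a variant of the proof of Theorem~\ref{thmmain} that does not use simplicity of $N$), and only \emph{then} deduces that $N$ must be simple (a proper $N'\subsetneq N$ would induce up to a proper submodule of $L$). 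Finally, injectivity of $\mathtt{l}_k$ on $N$ is derived from minimality of $k$. This order of operations is the essential idea your proposal is missing.
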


Theorem~\ref{thmmain2} is proved in Section~\ref{s2}. We also prove (in Subsection~\ref{s2.3}) that the condition
Theorem~\ref{thmmain2}\eqref{thmmain2.1} is equivalent to the condition that  $\mathtt{l}_i$ acts on $L$
locally finitely for all sufficiently large $i$. In Section~\ref{s3} we list many examples to which
Theorems~\ref{thmmain} and \ref{thmmain2} apply. These include all versions of Whittaker modules over
$\mathfrak{V}$ constructed in \cite{OW,LGZ,FJK}. We also construct several new families of simple
$\mathfrak{V}$-modules. For $n\in\mathbb{Z}_+$ denote by $\mathfrak{a}_n$ the  finite dimensional Lie algebra
$\mathfrak{V}_+/\mathfrak{V}^{(n)}_+$. Theorem~\ref{thmmain2} reduces classification of simple modules in
$\mathcal{X}$ to classification of simple modules over {\em finite dimensional} Lie algebras $\mathfrak{a}_n$
(for all $n$). For $n=0$ the algebra $\mathfrak{a}_0$ is commutative and its simple modules are one dimensional,
which leads exactly to simple highest weight modules. For $n=1$, all simple $\mathfrak{a}_1$-modules are
constructed in \cite{Bl}. These can be used both to recover some classes of Whittaker modules over
$\mathfrak{V}$ and to construct many new simple modules. We use the $n=1$ case to classify
(see Proposition~\ref{prop25}) all simple $\mathfrak{a}_2$-modules. This again recovers some classes of
Whittaker modules over $\mathfrak{V}$ (in particular, all modules constructed in \cite{OW}) and produces many new
simple modules. As far as we know, for $n>2$ the classification problem for simple $\mathfrak{a}_n$-modules is
still open.

We finish the paper with a revision of the general Whittaker setup from \cite{BM} in Section~\ref{s4}.

\vspace{5mm}

\noindent
{\bf Acknowledgement.} The major part of this work was done during the visit of the second author to Uppsala
in May 2012. The hospitality and financial support of Uppsala University are gratefully acknowledged. The first
author is partially supported by the Royal Swedish Academy of Sciences and by the Swedish
Research Council. The second author is partially supported by NSERC. We thank Hongjia Chen, Haijun Tan and
Xiangqian Guo for their comments on the original version of the paper.

\section{Proof of Theorem~\ref{thmmain}}\label{s1}

\subsection{An indexing set for a basis}\label{s1.1}

Denote by $\mathbb{M}$ the set of all (infinite) vectors of the form $\mathbf{i}:=(\dots,i_2,i_1)$ with coefficients
in $\mathbb{Z}_+$, satisfying the condition that the number of nonzero coefficients is finite. Let $\mathbf{0}$
denote the element $(\dots,0,0)\in\mathbb{M}$ and for $i\in\mathbb{N}$ let $\varepsilon_i$ denote the element
$(\dots,0,0,1,0,0,\dots,0)\in\mathbb{M}$, where $1$ is in the $i$'th position from the right. For
$\mathbf{i}\in\mathbb{M}$ denote by $\mathbf{d}(\mathbf{i})$ the {\em degree} of $\mathbf{i}$
defined as $\sum_{s\geq 1}i_s$ (note that the sum is finite). Denote also by $\mathbf{w}(\mathbf{i})$ the {\em weight}
of $\mathbf{i}$  defined as $\sum_{s\geq 1}s\cdot i_s$ (which is again finite).

For  $\mathbf{i}\in \mathbb{M}$ denote by $\mathtt{l}^{\mathbf{i}}$ the element
$\dots \mathtt{l}_{-3}^{i_3}\mathtt{l}_{-2}^{i_2}\mathtt{l}_{-1}^{i_1}\in U(\mathfrak{V})$ (note that the product
is, in fact, finite because of the definition of $\mathbb{M}$).
By the PBW Theorem, every element of $\mathrm{Ind}_{\theta}(N)$ can be uniquely written in the form
\begin{equation}\label{eq1}
\sum_{\mathbf{i}\in\mathbb{M}}\mathtt{l}^{\mathbf{i}} v_{\mathbf{i}},
\end{equation}
where all $v_{\mathbf{i}}\in N$ and only finitely many of  the $v_{\mathbf{i}}$'s are nonzero.
For $v\in \mathrm{Ind}_{\theta}(N)$ written in the form \eqref{eq1}, we denote by $\mathrm{supp}(v)$ the set of
all $\mathbf{i}\in \mathbb{M}$ such that $v_{\mathbf{i}}\neq 0$.

\subsection{Reverse lexicographic and principal orders}\label{s1.3}

Denote by $<$ the {\em reverse lexicographic} total order on $\mathbb{M}$, defined recursively (with respect to
the degree) as follows: $\mathbf{0}$ is the minimum element; and for different nonzero
$\mathbf{i},\mathbf{j}\in \mathbb{M}$  we have $\mathbf{i}<\mathbf{j}$  if and only if one of the
following conditions is satisfied:
\begin{itemize}
\item $\min\{s:i_s\neq 0\}>\min\{s:j_s\neq 0\}$;
\item $\min\{s:i_s\neq 0\}=\min\{s:j_s\neq 0\}=k$ and $\mathbf{i}-\varepsilon_{k}<\mathbf{j}-\varepsilon_{k}$.
\end{itemize}

Define the {\em principal} total order $\prec$ on $\mathbb{M}$ as follows: for different
$\mathbf{i},\mathbf{j}\in \mathbb{M}$ set $\mathbf{i}\prec \mathbf{j}$ if and only if one of the
following conditions is satisfied:
\begin{itemize}
\item $\mathbf{w}(\mathbf{i})<\mathbf{w}(\mathbf{j})$;
\item $\mathbf{w}(\mathbf{i})=\mathbf{w}(\mathbf{j})$ and $\mathbf{d}(\mathbf{i})<\mathbf{d}(\mathbf{j})$;
\item $\mathbf{w}(\mathbf{i})=\mathbf{w}(\mathbf{j})$ and  $\mathbf{d}(\mathbf{i})=\mathbf{d}(\mathbf{j})$,
but $\mathbf{i}< \mathbf{j}$.
\end{itemize}

\subsection{The argument}\label{s1.4}

For  a nonzero $v\in \mathrm{Ind}_{\theta}(N)$ let $\mathfrak{l}(v)$ denote the maximal (with respect to $\prec$)
element of $\mathrm{supp}(v)$, called the {\em leading term} of $v$. Let $M$ be a nonzero submodule of
$\mathrm{Ind}_{\theta}(N)$. Denote by $m$ the minimal non-negative integer for which there exist a nonzero
$v\in M$ such that $\mathbf{w}(\mathfrak{l}(v))=m$. If $m=0$, then $v$ belongs to the canonical copy
$1\otimes N$ of $N$ in $\mathrm{Ind}_{\theta}(N)$, which implies $M=\mathrm{Ind}_{\theta}(N)$ since $N$ is simple
and generates $\mathrm{Ind}_{\theta}(N)$. Our aim is to show that $m>0$ leads to a contradiction.

Assume that $m>0$ and let $v\in M$ be a nonzero element such that $\mathbf{w}(\mathfrak{l}(v))=m$.
We assume that $v$ is in the form \eqref{eq1}. Let $\mathbf{j}:=\mathfrak{l}(v)$ and set
$p:=\min\{s:j_s\neq 0\}>0$. Then the element $\mathtt{l}_{k+p}v$ belongs to $M$. To write
$\mathtt{l}_{k+p}v$ in the form \eqref{eq1} we have to move $\mathtt{l}_{k+p}$ all the way to the right
using the commutation relations. Let us for the moment assume that $\mathtt{l}_{k+p}v\neq 0$ (we will prove
it later). By Theorem~\ref{thmmain}\eqref{thmmain.2}, we have  $\mathtt{l}_{k+p}v_{\mathbf{i}}=0$ for all
$\mathbf{i}\in \mathrm{supp}(v)$, which means that every $\mathbf{i}'\in \mathrm{supp}(\mathtt{l}_{k+p}v)$
is in the support of $[\mathtt{l}_{k+p},\mathtt{l}^{\mathbf{i}}]v$ for some $\mathbf{i}\in \mathrm{supp}(v)$.
In particular, $\mathbf{w}(\mathbf{i}')<\mathbf{w}(\mathbf{i})$ and thus
$\mathbf{w}(\mathfrak{l}(\mathtt{l}_{k+p}v))<\mathbf{w}(\mathfrak{l}(v))=m$, which contradicts our choice of $m$.

It remains to show that $\mathtt{l}_{k+p}v\neq 0$. To prove this, it is enough to show that
$\mathbf{j}':=\mathbf{j}-\varepsilon_p\in \mathrm{supp}(\mathtt{l}_{k+p}v)$. By
Theorem~\ref{thmmain}\eqref{thmmain.1}, we have $\mathtt{l}_{k}v_{\mathbf{j}}\neq 0$.  This implies that
$\mathbf{j}'\in \mathrm{supp}([\mathtt{l}_{k+p},\mathtt{l}^{\mathbf{j}}]v_{\mathbf{j}})$ (to get $\mathbf{j}'$
we simply commute $\mathtt{l}_{k+p}$ with one of the $\mathtt{l}_{-p}$'s appearing in $\mathtt{l}^{\mathbf{j}}$).
In fact, it is easy to see that $\mathbf{j}'=\mathfrak{l}([\mathtt{l}_{k+p},\mathtt{l}^{\mathbf{j}}]v_{\mathbf{j}})$.
So, it remains to show that $\mathbf{j}'\not\in \mathrm{supp}([\mathtt{l}_{k+p},\mathtt{l}^{\mathbf{i}}]v_{\mathbf{i}})$
for any $\mathbf{i}\in\mathrm{supp}(v)\setminus\{\mathbf{j}\}$.

Assume first that $\mathbf{w}(\mathbf{i})<\mathbf{w}(\mathbf{j})$. From Theorem~\ref{thmmain}\eqref{thmmain.2} it
then follows that for any $\mathbf{k}\in \mathrm{supp}([\mathtt{l}_{k+p},\mathtt{l}^{\mathbf{i}}]v_{\mathbf{i}})$
we have
\begin{displaymath}
\mathbf{w}(\mathbf{k})\leq \mathbf{w}(\mathbf{i})-p<\mathbf{w}(\mathbf{j})-p=\mathbf{w}(\mathbf{j}'),
\end{displaymath}
which means that $\mathbf{k}\neq \mathbf{j}'$.

Assume now that $\mathbf{w}(\mathbf{i})=\mathbf{w}(\mathbf{j})$ but
$\mathbf{d}(\mathbf{i})<\mathbf{d}(\mathbf{j})$. If the element
$\mathbf{k}\in \mathrm{supp}([\mathtt{l}_{k+p},\mathtt{l}^{\mathbf{i}}]v_{\mathbf{i}})$
is such that $\mathbf{d}(\mathbf{k})<\mathbf{d}(\mathbf{i})$, then
\begin{displaymath}
\mathbf{d}(\mathbf{k})< \mathbf{d}(\mathbf{i})\leq \mathbf{d}(\mathbf{j})-1=\mathbf{d}(\mathbf{j}'),
\end{displaymath}
which means that $\mathbf{k}\neq \mathbf{j}'$. If
$\mathbf{k}\in \mathrm{supp}([\mathtt{l}_{k+p},\mathtt{l}^{\mathbf{i}}]v_{\mathbf{i}})$
is such that $\mathbf{d}(\mathbf{k})=\mathbf{d}(\mathbf{i})$, then such $\mathbf{k}$ can only be obtained
by commuting $\mathtt{l}_{k+p}$ with some $\mathtt{l}_{-s}$, where $s>k+p$. Therefore
\begin{displaymath}
\mathbf{w}(\mathbf{k})=\mathbf{w}(\mathbf{i})-k-p<\mathbf{w}(\mathbf{i})-p=
\mathbf{w}(\mathbf{j})-p=\mathbf{w}(\mathbf{j}')
\end{displaymath}
(note that here we used $k>0$), which again means that $\mathbf{k}\neq \mathbf{j}'$.

Finally, consider the case $\mathbf{w}(\mathbf{i})=\mathbf{w}(\mathbf{j})$ and
$\mathbf{d}(\mathbf{i})=\mathbf{d}(\mathbf{j})$. Then $\mathbf{i}<\mathbf{j}$.
Let $q:=\min\{s:i_s\neq 0\}>0$. We have either $q>p$ or $q=p$. First consider the case
$q>p$. In this case the same argument as in the previous paragraph shows that for any
$\mathbf{k}\in \mathrm{supp}([\mathtt{l}_{k+p},\mathtt{l}^{\mathbf{i}}]v_{\mathbf{i}})$
we have $\mathbf{w}(\mathbf{k})<\mathbf{w}(\mathbf{j}')$.
In the remaining case $q=p$ we get $\mathfrak{l}([\mathtt{l}_{k+p},\mathtt{l}^{\mathbf{i}}]v_{\mathbf{i}})=
\mathbf{i}-\varepsilon_q=\mathbf{i}-\varepsilon_p$. From the recursive definition of the lexicographic order
we also have $\mathbf{i}-\varepsilon_p<\mathbf{j}-\varepsilon_p=\mathbf{j}'$ and the claim follows.

\section{Proof of Theorem~\ref{thmmain2}}\label{s2}

\subsection{\eqref{thmmain2.2}$\Rightarrow$\eqref{thmmain2.1}}\label{s2.001}

Similarly to \cite[Subsection~2.3]{MZ} one shows that $L$ contains a non-zero $v$ satisfying
$\mathfrak{V}^{(n)}_+v=0$. As $L$ is generated by $v$ and has a central character, the PBW Theorem implies
that $L$ is spanned by all elements of the form
\begin{displaymath}
\mathtt{l}^{\mathbf{i}}v:=\cdots\mathtt{l}_{n-2}^{i_{n-2}}\mathtt{l}_{n-1}^{i_{n-1}}\mathtt{l}_{n}^{i_n}v
\end{displaymath}
(where only finitely many factors are different from $1$). Hence it is enough to show that each of these
vectors generates a finite dimensional $\mathfrak{V}^{(n)}_+$-submodule of $V$ (and then we can take $k=n$).
Let $I$ denote the annihilator of $\mathtt{l}^{\mathbf{i}}v$ in $U(\mathfrak{V}^{(n)}_+)$. Set
$m:=\sum_{s\leq n}|s|i_s< \infty$. Then $I$ contains all $\mathtt{l}_i$ for which  $i>m+n$ and any product of
$\mathtt{l}_j$, $n< j \leq m+n$, with at least $m+1$ factors. It follows that the codimension of $I$
in $U(\mathfrak{V}^{(n)}_+)$ is finite, which implies the claim.

\subsection{\eqref{thmmain2.3}$\Rightarrow$\eqref{thmmain2.2}}\label{s2.01}

Both highest weight modules and modules $\mathrm{Ind}_{\theta}(N)$ are generated by elements annihilated by some
$\mathfrak{V}^{(k)}_+$. Then the argument from the previous subsection shows that this
$\mathfrak{V}^{(k)}_+$ acts on these modules both locally finitely and locally nilpotently.

\subsection{\eqref{thmmain2.1}$\Rightarrow$\eqref{thmmain2.2}}\label{s2.1}

Let $V$ be a finite-dimensional $\mathfrak{V}^{(k)}_+$-module and $\mathfrak{i}\subset \mathfrak{V}^{(k)}_+$
be the kernel of the representation map. Then $\mathfrak{i}$ is an ideal of $\mathfrak{V}^{(k)}_+$ of finite
codimension. First we claim that $\mathfrak{i}$ contains $\mathtt{l}_i$ for some $i$. If not, then there exists a
minimal $m\in\mathbb{N}$ such that $\mathfrak{i}$ contains an element of the form
$a_s\mathtt{l}_s+a_{s+1}\mathtt{l}_{s+1}+\dots+a_{s+m}\mathtt{l}_{s+m}$ for some $s \in\mathbb{N}$ and complex
numbers $a_s,a_{s+1},\dots,a_{s+m}$ satisfying $a_s,a_{s+m}\neq 0$. Then $\mathfrak{i}$ contains
\begin{multline*}
[\mathtt{l}_{s},a_s\mathtt{l}_s+a_{s+1}\mathtt{l}_{s+1}+\dots+a_{s+m}\mathtt{l}_{s+m}]=\\=
a_{s+1}\mathtt{l}_{2s+1}+2a_{s+2}\mathtt{l}_{2s+2}+\dots+ma_{s+m}\mathtt{l}_{2s+m}\neq 0,
\end{multline*}
which contradicts our choice of $m$.

Next observe that $[\mathfrak{V}^{(k)}_+,\mathtt{l}_i]$ contains $\mathfrak{V}^{(2i+1)}_+$ and thus
$\mathfrak{V}^{(2i+1)}_+\subset \mathfrak{i}$. This implies that $L$ contains a nonzero $v$ such that
$\mathfrak{V}^{(2i+1)}_+v=0$. Then the argument from Subsection~\ref{s2.001} shows that $\mathfrak{V}^{(2i+1)}_+$
acts on $L$ locally nilpotently.

\subsection{\eqref{thmmain2.2}$\Rightarrow$\eqref{thmmain2.3}}\label{s2.2}

Let $k\in\mathbb{Z}_+$ be minimal such that the action of all $\mathtt{l}_i$, $i>k$, on $L$ is locally nilpotent.
Similarly to \cite[Subsection~2.3]{MZ} one shows that there exists a nonzero $v\in L$ annihilated by all
$\mathtt{l}_i$, $i>k$. If $k=0$, then $L$ is a highest weight module by \cite[Theorem~1(c)]{MZ}. Assume $k>0$.
Consider the vector space
\begin{displaymath}
N:=\{v\in L: \mathtt{l}_i v=0\text{ for all }i>k\}.
\end{displaymath}
We have $N\neq 0$ by the above agrument. As $\mathfrak{V}^{(k)}_+$ is an ideal of  $\mathfrak{V}_+$, the space $N$ is
stable under the action of $\mathfrak{V}_+$.  The central element $\mathtt{c}$ acts on any simple module as a scalar
(\cite[Proposition~2.6.8]{Di}), let $\theta$ be the corresponding scalar for $L$.
As $L$ is simple, it is generated by $N$ and hence the canonical map $\mathrm{Ind}_{\theta}(N)\to L$
sending $1\otimes x$ to $x$ for every $x\in N$ is surjective.

\begin{lemma}\label{lem21}
The canonical map $\mathrm{Ind}_{\theta}(N)\to L$ is an isomorphism.
\end{lemma}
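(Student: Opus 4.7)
The plan is to deduce injectivity of the canonical surjection $\mathrm{Ind}_{\theta}(N)\to L$ by rerunning the leading-term argument from Section~\ref{s1.4} on the kernel. Observe first that in the proof of Theorem~\ref{thmmain}, simplicity of $N$ was needed only in the base case $m=0$ (to pass from a single element of $1\otimes N$ to all of it), whereas the inductive step $m>0$ used only the two properties \eqref{thmmain.1} and \eqref{thmmain.2} of $\mathtt{l}_k$ acting on $N$. Thus I need to (i) replace the simplicity input with something tailored to the kernel $K$ of the map, and (ii) verify \eqref{thmmain.1} and \eqref{thmmain.2} for the $N$ defined above.

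For (i), the canonical map restricts to the identity inclusion $N\hookrightarrow L$ on $1\otimes N$, so $K\cap(1\otimes N)=0$, and any $v\in K$ with $\mathbf{w}(\mathfrak{l}(v))=0$ must be zero. Condition \eqref{thmmain.2} holds by the very definition of $N$. The crucial point is condition \eqref{thmmain.1}, injectivity of $\mathtt{l}_k$ on $N$. I would prove it by contradiction: suppose $0\neq v\in N$ satisfies $\mathtt{l}_kv=0$. Combined with the definition of $N$, this gives $\mathfrak{V}^{(k-1)}_+v=0$. Since $L$ is simple, it is generated by $v$, and the PBW-based argument of Subsection~\ref{s2.001} applied with $n=k-1$ then shows that $\mathfrak{V}^{(k-1)}_+$ acts locally nilpotently on $L$. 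In particular $\mathtt{l}_k$ acts locally nilpotently on $L$, contradicting the minimality of $k$.

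With both ingredients in hand, suppose $K\neq 0$ and pick a nonzero $v\in K$ minimizing $\mathbf{w}(\mathfrak{l}(v))$. The case $\mathbf{w}(\mathfrak{l}(v))=0$ is ruled out by (i). For $\mathbf{w}(\mathfrak{l}(v))>0$, the analysis of Section~\ref{s1.4} applies verbatim: writing $\mathbf{j}:=\mathfrak{l}(v)$ and $p:=\min\{s:j_s\neq 0\}$, the element $\mathtt{l}_{k+p}v$ lies in $K$, is nonzero thanks to injectivity of $\mathtt{l}_k$ on $N$, and has strictly smaller leading weight, contradicting minimality. Hence $K=0$ and the lemma follows. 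The main obstacle is clearly the injectivity statement in (ii); once that is secured, the rest is a careful reuse of machinery already built in Section~\ref{s1.4}.
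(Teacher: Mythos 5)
Your proof is correct and follows the paper's overall strategy: reduce to the observation that $K\cap(1\otimes N)=0$ rules out leading weight zero, then rerun the leading-term argument of Subsection~\ref{s1.4} on a kernel element of minimal leading weight. The one place where you depart from the paper's presentation is worth highlighting. You correctly note that the Subsection~\ref{s1.4} argument needs condition \eqref{thmmain.1} of Theorem~\ref{thmmain}, injectivity of $\mathtt{l}_k$ on $N$, and you therefore prove that injectivity up front: if $\mathtt{l}_k v=0$ for some nonzero $v\in N$ then $\mathfrak{V}^{(k-1)}_+v=0$, so by the argument of Subsection~\ref{s2.001} the ideal $\mathfrak{V}^{(k-1)}_+$ acts locally nilpotently on $L$, contradicting the minimality of $k$. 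The paper, by contrast, invokes Subsection~\ref{s1.4} directly (remarking only that simplicity of $N$ is not used) and defers the verification that $\mathtt{l}_k$ acts injectively on $N$ to after Lemma~\ref{lem21}; strictly speaking this is a small ordering issue, since that injectivity is already needed for the cross-reference to Subsection~\ref{s1.4}. Your reordering resolves it cleanly, and the injectivity argument you supply is essentially the one the paper uses a few lines later, so the mathematical content matches; your version is just slightly more careful in sequencing.
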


\begin{proof}
Assume that this is not the case and let $K\neq 0$ be the kernel of the canonical map.
Choose nonzero $v\in K$, written in the form \eqref{eq1}, such that $\mathbf{w}(\mathfrak{l}(v))=m$ is minimal possible.
Note that $m>0$ as the canonical map is bijective for $v$ satisfying $\mathbf{w}(\mathfrak{l}(v))=0$.
Since $K$ is a submodule, we have $\mathtt{l}_{k+p}v\in K$ for all $p\in\mathbb{N}$. Choosing $p$ as in
Subsection~\ref{s1.4} and using the proof from there (which starts in the second paragraph and does not use
simplicity of $N$), we get that the element $\mathtt{l}_{k+p}v$ from $K$ is nonzero and its leading coefficient
has strictly smaller weight, a contradiction.
\end{proof}

Let us now show that $N$ is simple. Assume that this is not the case and let $N'\subset N$ be a proper submodule.
Then $\mathrm{Ind}_{\theta}(N')$ is a proper submodule of $\mathrm{Ind}_{\theta}(N)$ by the PBW theorem, a
contradiction.

To complete the proof of Theorem~\ref{thmmain2} it remains to show that $\mathtt{l}_k$ acts injectively on
$N$. Assume that this is not the case. Then there is a nonzero $v\in N$ such that $\mathtt{l}_k v=0$.
As all $\mathtt{l}_i$, $i>k$, act on  $L$ locally nilpotently and $L$ is simple, it follows easily that the action of
$\mathtt{l}_k$ on $L$ is locally nilpotent as well. This contradicts our choice of $k$ and completes the proof.

\subsection{Reformulation of condition \eqref{thmmain2.1}}\label{s2.3}

As mentioned in the previous subsection, condition \eqref{thmmain2.2} of Theorem~\ref{thmmain2} is equivalent to
the condition that there exist $k\in\mathbb{N}$ such that for every $i>k$ the action of $\mathtt{l}_i$ on $L$ is
locally nilpotent (confer \cite[Theorem~1.1(c)]{MZ}). The aim of this subsection is to prove the following
similar reformulation for condition Theorem~\ref{thmmain2}\eqref{thmmain2.1}:

\begin{proposition}\label{prop35}
Condition \eqref{thmmain2.1} of Theorem~\ref{thmmain2} is equivalent to:
\begin{equation}
\label{eq5}
\exists k\in\mathbb{N}
\text{ such that } \forall i>k \text{ the action of } \mathtt{l}_i \text{ on } L \text{ is locally finite.}
\end{equation}
\end{proposition}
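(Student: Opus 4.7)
The direction $(\ref{thmmain2.1})\Rightarrow\eqref{eq5}$ is immediate: any $v\in L$ lying in a finite dimensional $\mathfrak{V}^{(k)}_+$-submodule $V\subseteq L$ has its $\mathtt{l}_i$-orbit confined to $V$ for every $i>k$, so $\mathtt{l}_i$ acts locally finitely on $v$.

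For the converse, the plan is to produce, from \eqref{eq5}, a nonzero $v\in L$ and $n\in\mathbb{N}$ with $\mathfrak{V}^{(n)}_+v=0$. Once this is done, the PBW argument of Subsection~\ref{s2.001} yields $(\ref{thmmain2.1})$ directly. The main idea is to obtain $v$ as a joint eigenvector of $\mathtt{l}_{k+1}$ and $\mathtt{l}_{k+2}$: if $\mathtt{l}_{k+1}v=\lambda v$ and $\mathtt{l}_{k+2}v=\mu v$, then commutativity of scalars forces
\[
\mathtt{l}_{2k+3}v=[\mathtt{l}_{k+1},\mathtt{l}_{k+2}]v=0.
\]
Applying the identity $\mathtt{l}_{k+1}\mathtt{l}_mv=\lambda\mathtt{l}_mv+(m-k-1)\mathtt{l}_{m+k+1}v$ (and its analogue for $\mathtt{l}_{k+2}$), one sees that $\mathtt{l}_mv=0$ propagates to $\mathtt{l}_{m+k+1}v=0$ and $\mathtt{l}_{m+k+2}v=0$ whenever $m>k+2$. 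Starting from $\mathtt{l}_{2k+3}v=0$ and iterating, a Frobenius coin argument based on $\gcd(k+1,k+2)=1$ covers all indices $m\geq k^2+3k+3$, yielding $\mathfrak{V}^{(n)}_+v=0$ for $n=k^2+3k+2$.

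The main obstacle is exhibiting such a joint eigenvector. Since $\mathtt{l}_{k+1}$ and $\mathtt{l}_{k+2}$ do not commute, joint spectral decomposition is unavailable directly. The plan is to construct a nonzero finite dimensional subspace $U\subset L$ stable under all of $\mathfrak{V}^{(k)}_+$: by the finite-codimension-ideal argument of Subsection~\ref{s2.1}, the annihilator of $U$ then contains $\mathfrak{V}^{(N)}_+$ for some $N$, so $U$ becomes a module over the finite dimensional solvable Lie algebra $\mathfrak{V}^{(k)}_+/\mathfrak{V}^{(N)}_+$, and Lie's theorem furnishes a simultaneous eigenvector for all $\mathtt{l}_i$ with $k<i\leq N$, in particular a joint eigenvector of $\mathtt{l}_{k+1}$ and $\mathtt{l}_{k+2}$. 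Producing $U$ is the delicate step: one starts with the finite dimensional $\mathtt{l}_{k+1}$-invariant subspace $\mathbb{C}[\mathtt{l}_{k+1}]u$ (for some $u\in L$, available by \eqref{eq5}) and progressively enlarges it under the other $\mathtt{l}_i$'s, $i>k$, using their individual local finitenesses together with the commutator shifts $[\mathtt{l}_i,\mathtt{l}_j]=(j-i)\mathtt{l}_{i+j}$ to control termination in finite dimensions.
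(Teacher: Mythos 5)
The forward direction is immediate. For the converse, your overall architecture agrees with the paper's: build a nonzero finite-dimensional $\mathfrak{V}_+^{(k)}$-submodule $U$ of $L$ and conclude from there. Your way of concluding is a reasonable self-contained alternative to the paper's appeal to Lemma~\ref{lem36} (i.e.\ to \cite[Theorem~6]{BM}): once $U$ exists, the argument of Subsection~\ref{s2.1} gives $\mathfrak{V}_+^{(N)}U=0$ for some $N$, so $U$ is a module over the finite-dimensional solvable algebra $\mathfrak{V}_+^{(k)}/\mathfrak{V}_+^{(N)}$, Lie's theorem yields a simultaneous eigenvector $v\in U$, and then $\mathfrak{V}_+^{(N)}v=0$, after which Subsection~\ref{s2.001} applies. (Your Frobenius coin estimate is elegant but superfluous, since the Lie eigenvector is already annihilated by $\mathfrak{V}_+^{(N)}$.)

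However, the step you yourself flag as delicate --- producing $U$ --- is where the real content lies, and your sketch does not supply it. Starting from $\mathbb{C}[\mathtt{l}_{k+1}]u$ and ``progressively enlarging under the other $\mathtt{l}_i$'s, $i>k$'' runs into the fundamental obstacle that there are infinitely many generators, and local finiteness of each $\mathtt{l}_i$ separately gives no a priori bound on which or how many of them must enter. The paper's resolution is Lemma~\ref{lem37}: starting specifically from an $\mathtt{l}_{k+1}$-\emph{eigenvector} $v$ (not an arbitrary $u$), the identity $(\mathtt{l}_{k+1}-\lambda)\mathtt{l}_iv=(i-k-1)\mathtt{l}_{i+k+1}v$ combined with local finiteness of $\mathtt{l}_{k+1}$ forces $\mathfrak{V}_+^{(k)}v$ to be spanned by $\mathtt{l}_jv$ with $j$ bounded, so only finitely many $\mathtt{l}_i$ are ever needed. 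Even granted this, showing that the space $V_m$ obtained by applying bounded powers of $\mathtt{l}_{k+2},\dots,\mathtt{l}_m$ is $\mathfrak{V}_+^{(k)}$-stable requires a further double induction on weight and degree, using Lemma~\ref{lem37} to push down any $\mathtt{l}_p$ with $p>m$ and the annihilating polynomials $f_p(\mathtt{l}_p)$ to cap the powers of $\mathtt{l}_p$ with $p\leq m$. Neither the eigenvector bootstrap nor the closure induction appears in your proposal, so the existence of $U$ remains unestablished.
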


That Theorem~\ref{thmmain2}\eqref{thmmain2.1} implies \eqref{eq5} is obvious so we only have to establish the
reverse inclusion. To do this we will need several lemmata.

\begin{lemma}\label{lem36}
Let $k\in\mathbb{N}$ and $V$ be a simple $\mathfrak{V}$-module containing a non-zero finite dimensional
$\mathfrak{V}_+^{(k)}$-submodule $X$. Then $V$ is a locally finite $\mathfrak{V}^{(k)}_+$-module.
\end{lemma}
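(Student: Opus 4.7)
The plan is to prove that
\begin{displaymath}
Z := \{v \in V : v \text{ lies in some finite-dimensional } \mathfrak{V}^{(k)}_+\text{-submodule of } V\}
\end{displaymath}
coincides with $V$. Since $Z$ is visibly a subspace containing $X \neq 0$ and $V$ is simple, this will force $Z = V$. Stability under $\mathfrak{V}^{(k)}_+$ and under the scalar action of $\mathtt{c}$ is clear, so the content is to show $\mathtt{l}_j Z \subset Z$ for every $j \in \mathbb{Z}$. I would reduce this to an enlargement lemma: for any finite-dimensional $\mathfrak{V}^{(k)}_+$-submodule $X' \subset V$ and any $j \in \mathbb{Z}$, there exists a finite-dimensional $\mathfrak{V}^{(k)}_+$-submodule $X'' \supset \mathtt{l}_j X'$.

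When $j \geq 0$ one takes $X'' := X' + \mathtt{l}_j X'$: the identity $\mathtt{l}_i \mathtt{l}_j = \mathtt{l}_j \mathtt{l}_i + (j - i) \mathtt{l}_{i+j}$ combined with the inequality $i + j > k$ (automatic when $i > k$, $j \geq 0$) shows $\mathtt{l}_i X'' \subseteq \mathtt{l}_j X' + X' = X''$. The delicate case is $j < 0$; here I would put
\begin{displaymath}
X'' := X' + \mathtt{l}_j X' + \sum_{m = k + j + 1}^{k} \mathtt{l}_m X',
\end{displaymath}
still finite-dimensional since the sum has $|j|$ terms. To verify stability, for $u \in X'$ and $i > k$ one expands
\begin{displaymath}
\mathtt{l}_i \mathtt{l}_m u = \mathtt{l}_m \mathtt{l}_i u + (m - i)\, \mathtt{l}_{i + m} u + \delta_{i, -m}\, \tfrac{i^3 - i}{12}\, \mathtt{c}\, u.
\end{displaymath}
The first summand lies in $\mathtt{l}_m X'$, the third is a scalar multiple of $u \in X'$, and the middle summand is problematic only when $i + m \leq k$. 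For $m$ in the exceptional range $\{k+j+1, \ldots, k\}$ this forces $i + m \in \{k+m+1, \ldots, k\}$, which is contained in $\{k+j+1, \ldots, k\}$ since $m \geq k+j+1$. Hence $\mathtt{l}_{i+m} u$ remains in $X''$, and the analogous computation for $\mathtt{l}_i \mathtt{l}_j X'$ (where the exceptional indices are precisely $i+j \in \{k+j+1, \ldots, k\}$) is covered by the same sum.

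Granting the enlargement lemma, every $\mathtt{l}_j$ sends $Z$ into $Z$, so $Z$ is a nonzero $\mathfrak{V}$-submodule of the simple module $V$ and hence equals $V$. The main obstacle is the case $j < 0$: one must pin down exactly which intermediate indices $i + m \leq k$ the commutators of $\mathfrak{V}^{(k)}_+$ with $\mathtt{l}_j$ can generate, and verify that augmenting $X'$ by the finitely many $\mathtt{l}_m X'$ with $m \in \{k+j+1, \ldots, k\}$ already closes the procedure rather than setting off an infinite cascade, which is precisely what the displayed containment guarantees.
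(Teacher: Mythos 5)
Your argument is correct, but it is a genuinely different route from the paper: the paper disposes of this lemma by a single citation to \cite[Theorem~6]{BM}, whereas you supply a self-contained proof. The idea is the standard one of showing the locally finite part $Z$ is a nonzero $\mathfrak{V}$-submodule of the simple $V$, and the substance lies in your enlargement lemma. The enlargement is correctly set up: since $i>k$ forces $i+m\geq (k+1)+m$, every ``low'' index $i+m\leq k$ that the commutators can produce satisfies $i+m\geq k+1+(k+j+1)>k+j+1$, so it stays inside the finite window $\{k+j+1,\dots,k\}$ that you adjoined, and no infinite cascade occurs. Two points worth making explicit, though neither is a gap: (i) you quietly use that $\mathfrak{V}_+^{(k)}$ is not merely generated by, but actually \emph{spanned} by, the $\mathtt{l}_i$ with $i>k$ (true, since $[\mathtt{l}_i,\mathtt{l}_j]=(j-i)\mathtt{l}_{i+j}$ for $i,j>k$ with no central term), so stability under each $\mathtt{l}_i$ suffices; (ii) the appeal to simplicity of $V$ is used twice, once to force $Z=V$ and once, more tacitly, to make $\mathtt{c}$ act as a scalar so that the central terms $\delta_{i,-m}\frac{i^3-i}{12}\mathtt{c}u$ (which are genuinely nonzero when $j<-k$ allows $m=-i<-k$) land back in $X'$. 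With these spelled out, your proof stands on its own and in effect replaces the black-box citation by a transparent commutator computation; what you give up is the generality of the Whittaker-pair framework of \cite{BM}, of which this lemma is a special case.
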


\begin{proof}
This is \cite[Theorem~6]{BM}.
\end{proof}

\begin{lemma}\label{lem37}
Let $k\in\mathbb{N}$. Assume that $V$ is a $\mathfrak{V}_+^{(k)}$-module on which $\mathtt{l}_{k+1}$
acts locally finitely. Let further $v\in V$ and $\lambda\in\mathbb{C}$ be
such that $v\neq 0$ and $\mathtt{l}_{k+1}v=\lambda v$. Then $\dim_{\mathbb{C}} \mathfrak{V}_+^{(k)}v<\infty$.
\end{lemma}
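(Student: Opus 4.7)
The plan is to exploit the eigenvector property of $v$ together with the relation
\[
[\mathtt{l}_{k+1},\mathtt{l}_j] = (j-k-1)\,\mathtt{l}_{k+1+j}
\]
to reduce computation of $\mathtt{l}_i v$ for large $i$ to the action of the single operator $\mathtt{l}_{k+1}$, which we know to act locally finitely.

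First I would rewrite the commutation relation, upon applying both sides to $v$ and using $\mathtt{l}_{k+1}v = \lambda v$, in the form
\[
\mathtt{l}_{k+1+j}\, v \;=\; \tfrac{1}{j-k-1}\,(\mathtt{l}_{k+1}-\lambda)\,\mathtt{l}_j v \qquad (j>k+1).
\]
This lets me express $\mathtt{l}_i v$ for $i \geq 2k+3$ in terms of $\mathtt{l}_{k+1}$ applied to $\mathtt{l}_{i-k-1} v$, where $i-k-1 < i$. The obstruction to pushing this all the way down is that the identity breaks for $j=k+1$ (the coefficient vanishes), and for $k+2 \leq j \leq 2k+2$ the index $j-k-1$ falls into the range $\{1,\ldots,k+1\}$, so we cannot step below $i = 2k+2$. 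Hence these "intermediate" vectors $\mathtt{l}_{k+2}v,\ldots,\mathtt{l}_{2k+2}v$ must be included by hand in the starting data.

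Then I would define $W$ to be the smallest $\mathtt{l}_{k+1}$-invariant subspace of $V$ containing the finite set $\{v,\mathtt{l}_{k+2}v,\mathtt{l}_{k+3}v,\ldots,\mathtt{l}_{2k+2}v\}$. Since $\mathtt{l}_{k+1}$ acts locally finitely, each of these vectors is contained in some finite-dimensional $\mathtt{l}_{k+1}$-invariant subspace, and $W$ is the sum of finitely many such, hence finite-dimensional.

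Finally, I would prove by induction on $i$ that $\mathtt{l}_i v \in W$ for every $i > k$. The base cases $k+1 \leq i \leq 2k+2$ hold by construction (with $\mathtt{l}_{k+1}v = \lambda v \in \mathbb{C}v \subseteq W$). For the inductive step at $i \geq 2k+3$, the displayed identity above with $j = i-k-1 \geq k+2$ gives
\[
\mathtt{l}_i v \;=\; \tfrac{1}{j-k-1}(\mathtt{l}_{k+1}-\lambda)\,\mathtt{l}_j v,
\]
and $\mathtt{l}_j v \in W$ by the induction hypothesis, while $\mathtt{l}_{k+1} W \subseteq W$ by construction; hence $\mathtt{l}_i v \in W$. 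Since $\mathfrak{V}_+^{(k)}$ has basis $\{\mathtt{l}_i : i > k\}$, this shows $\mathfrak{V}_+^{(k)}v \subseteq W$ is finite-dimensional, which is the desired conclusion. The only subtle point in the whole argument is the observation that one must prime the induction with $k+1$ separate vectors rather than a single one.
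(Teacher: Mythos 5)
Your proof is correct and rests on exactly the same mechanism as the paper's: the commutator identity $(\mathtt{l}_{k+1}-\lambda)\mathtt{l}_j v=(j-k-1)\mathtt{l}_{k+1+j}v$, the observation that one must prime the recursion with the $k+1$ vectors $\mathtt{l}_{k+2}v,\dots,\mathtt{l}_{2k+2}v$, and the local finiteness of $\mathtt{l}_{k+1}$. The only difference is cosmetic: the paper extracts explicit finite linear-dependence relations among the $\mathtt{l}_{i+s(k+1)}v$ to bound the span, whereas you package the same information more cleanly as the smallest $\mathtt{l}_{k+1}$-invariant subspace containing the priming set.
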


\begin{proof}
As the action of $\mathtt{l}_{k+1}$ on $V$ is locally finite, for any $i$ from the set $\{k+2,k+3,\dots,2k+2\}$ there
is some $m_i\in\mathbb{N}$ such that the elements $\mathtt{l}_iv$, $(\mathtt{l}_{k+1}-\lambda)\mathtt{l}_iv$,
$(\mathtt{l}_{k+1}-\lambda)^2\mathtt{l}_iv$,\dots,
$(\mathtt{l}_{k+1}-\lambda)^{m_i}\mathtt{l}_iv$ are linearly dependent. By induction on $s$ one shows that
$(\mathtt{l}_{k+1}-\lambda)^s\mathtt{l}_iv$ equals $\mathtt{l}_{i+s(k+1)}v$ up to a scalar, which implies
existence of a relation of the form
\begin{displaymath}
\sum_{s=0}^{m_i}\alpha_{i,s}\mathtt{l}_{i+s(k+1)}v=0,
\end{displaymath}
where $\alpha_{i,s}\in \mathbb{C}$ and not all of them are zero. Taking the commutator with
$\mathtt{l}_{k+1}-\lambda$ gives
\begin{displaymath}
\sum_{s=1}^{m_i+1}\alpha^{(1)}_{i,s}\mathtt{l}_{i+s(k+1)}v=0,
\end{displaymath}
where $\alpha^{(1)}_{i,s}\in \mathbb{C}$ and not all of them are zero. This can be continued inductively.
Let $m:=\max\{m_i:i\in\{k+2,k+3,\dots,2k+2\}\}$. It follows that $\mathfrak{V}_+^{(k)}v$ coincides with the
linear span of $\mathtt{l}_{j}v$, $j\leq (m+3)(k+1)$.
\end{proof}

\begin{proof}[Proof of Proposition~\ref{prop35}.]
As the action of $\mathtt{l}_{k+1}$ on $L$ is locally finite, there is $\lambda\in\mathbb{C}$ and a non-zero
element $v\in L$ such that $\mathtt{l}_{k+1}v=\lambda v$. By Lemma~\ref{lem37}, there is $m\in\mathbb{N}$
such that $V_{k+1}:=\mathfrak{V}_+^{(k)}v$ is spanned by $v$, $\mathtt{l}_{k+2}v$, $\mathtt{l}_{k+3}v$,\dots,
$\mathtt{l}_{m}v$. For $i\in\{k+2,k+3,\dots,m\}$ define now the finite dimensional space $V_{i}$ and
a positive integer $d_i$ inductively as follows: if $V_{i-1}$ is defined, then $d_i$ is the degree of
some nonzero polynomial $f_{i}(\mathtt{l}_i)$ which annihilates $V_{i-1}$ (this is well-defined as
$V_{i-1}$ is finite dimensional and $\mathtt{l}_i$ acts on $L$ locally finitely); if $V_{i-1}$ and
$d_i$ are defined, then $V_{i}$ is the linear span of $V_{i-1}$, $\mathtt{l}_iV_{i-1}$, $\mathtt{l}^2_iV_{i-1}$,\dots,
$\mathtt{l}^{d_i}_iV_{i-1}$, which is finite dimensional.

By Lemma~\ref{lem36}, to prove Proposition~\ref{prop35} it is enough to show that $L$ contains a nonzero
finite dimensional $\mathfrak{V}_+^{(k)}$-submodule. For this it is enough to show that the finite dimensional
vector space $V_{m}$ defined in the previous paragraph is, in fact, a $\mathfrak{V}_+^{(k)}$-submodule. This vector
space is spanned over $\mathbb{C}$ by elements of the form
\begin{displaymath}
\mathtt{l}^{\mathbf{i}}v:=\mathtt{l}_{m}^{i_m}\mathtt{l}_{m-1}^{i_{m-1}}\cdots
\mathtt{l}_{k+2}^{i_{k+2}}v,\quad 0\leq i_s\leq d_s\text{ for all }s\in\{k+2,k+3,\dots,m\}.
\end{displaymath}
We will show, by induction on the weight and the degree of $\mathbf{i}$, that for any
$\mathtt{l}_s$, $s>k$, we have $\mathtt{l}_s \mathtt{l}^{\mathbf{i}}v\in V_m$. The basis of the induction
($\mathbf{i}=\mathbf{0}$) follows from definitions and Lemma~\ref{lem37}.

To prove the induction step we have to move $\mathtt{l}_s$ in $\mathtt{l}_s \mathtt{l}^{\mathbf{i}}v$ all the
way to the right using the commutation relations. Consider first the case $s>m$. In this case any
new element $\mathtt{l}_p$ which appears as the result of commutation satisfies $p>m$ and any commutation
decreases the degree of the monomial. When such $\mathtt{l}_p$ is moved all the way to the right and applied
to $v$, we use Lemma~\ref{lem37} which expresses $\mathtt{l}_pv$ as a linear combination of $\mathtt{l}_qv$
with $q<p$. This keeps the degree but decreases the weight. Therefore in both cases we may apply the
inductive assumption.

If $s\leq m$, then two types of new elements $\mathtt{l}_p$ may appear. Those for which $p>m$ are dealt with
as described in the previous paragraph. Those for which $p\leq m$ should be commuted to their natural place in the
monomial. If this results in the fact that the corresponding degree of $\mathtt{l}_p$ will exceed $d_p$,
then we use that $f_{p}(\mathtt{l}_p)$ annihilates $V_{p-1}$ which allows us to write this element as a linear
combination of elements both of smaller weight and degree. This justifies the induction and completes the proof.
\end{proof}

\section{Old and new simple Virasoro modules}\label{s3}

\subsection{Highest weight modules}\label{s3.1}

Let $\mathfrak{h}$ be the Cartan subalgebra of $\mathfrak{V}$, spanned by $\mathtt{l}_0$ and $\mathtt{c}$.
For $\lambda\in\mathfrak{h}^*$ we have the {\em Verma module}
$M(\lambda):=U(\mathfrak{V})\otimes_{U(\mathfrak{h}+\mathfrak{V}_+)}\mathbb{C}_{\lambda}$, where
$\mathtt{l}_i\mathbb{C}_{\lambda}=0$ for $i>0$, while $\mathtt{l}_0$ and $\mathtt{c}$ act on $\mathbb{C}_{\lambda}$
via scalars $\lambda(\mathtt{l}_0)$ and $\lambda(\mathtt{c})$, respectively. The module $M(\lambda)$ has the
unique simple top $L(\lambda)$, the unique (up to isomorphism) {\em simple highest weight module} with highest
weight $\lambda$. In general, $M(\lambda)\not\simeq L(\lambda)$, see e.g. \cite{IK} for details. These modules
correspond to the case $k=0$ in Theorem~\ref{thmmain2}.

\subsection{Whittaker modules of Ondrus and Wiesner}\label{s3.2}

Consider a nonzero  $\lambda:=(\lambda_1,\lambda_2)\in\mathbb{C}^2$. Denote by $N_{\lambda}$ the $\mathfrak{V}_+$-module
$U(\mathfrak{V}_+)/I$, where $I$ is the left ideal generated by $\mathtt{l}_1-\lambda_1$, $\mathtt{l}_2-\lambda_2$,
$\mathtt{l}_3$, $\mathtt{l}_4$,\dots. By the PBW Theorem, $N_{\lambda}\cong\mathbb{C}[\mathtt{l}_0]$ as a
$\mathbb{C}[\mathtt{l}_0]$-module, $\mathtt{l}_iN_{\lambda}=0$ for $i>2$, and for $i=1,2$ we have
$\mathtt{l}_if(\mathtt{l}_0)=f(\mathtt{l}_0-i)\lambda_i$, $f(\mathtt{l}_0)\in \mathbb{C}[\mathtt{l}_0]$.
As $\lambda\neq 0$, taking linear combinations of
$f(\mathtt{l}_0)$ and $f(\mathtt{l}_0-i)\lambda_i$ over $\mathbb{C}$ one can always reduce the degree, which implies
that $N_{\lambda}$ is simple. The module $N_{\lambda}$ obviously satisfies the conditions of
Theorem~\ref{thmmain} (with $k\in\{1,2\}$). Hence, by Theorem~\ref{thmmain}, we obtain the corresponding
simple induced $\mathfrak{V}$-module $\mathrm{Ind}_{\theta}(N_{\lambda})$. These are exactly the
Whittaker modules over $\mathfrak{V}$ constructed in \cite{OW}.

\subsection{Whittaker modules of L{\"u}, Guo and Zhao}\label{s3.3}

Consider some $k\in\mathbb{N}$ and set $r:=\lceil\frac{k}{2}\rceil-1$. Choose
$\lambda=(\lambda_{r+1},\lambda_{r+2},\dots, \lambda_{k})\in\mathbb{C}^{k-r}$ such that $\lambda_{k}\neq 0$. Denote
by $K_{\lambda}$ the $\mathfrak{V}_+$-module $U(\mathfrak{V}_+)/I$, where $I$ is the left ideal generated by
\begin{displaymath}
\mathtt{l}_{r+1}-\lambda_{r+1}, \mathtt{l}_{r+2}-\lambda_{r+2},\dots,
\mathtt{l}_{k}-\lambda_{k},  \mathtt{l}_{k+1}, \mathtt{l}_{k+2},\dots.
\end{displaymath}

\begin{lemma}\label{lem31}
The  $\mathfrak{V}_+$-module $K_{\lambda}$ is simple.
\end{lemma}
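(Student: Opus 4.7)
By the PBW theorem, $K_\lambda$ has as a basis the monomials $\mathtt{l}_0^{i_0}\mathtt{l}_1^{i_1}\cdots\mathtt{l}_r^{i_r}$, so that as a vector space $K_\lambda\cong\mathbb{C}[\mathtt{l}_0,\mathtt{l}_1,\ldots,\mathtt{l}_r]$. My plan is to generalize the Ondrus--Wiesner argument from Subsection~\ref{s3.2}: first identify elements of $U(\mathfrak{V}_+)$ that act on $K_\lambda$ as a shift of $\mathtt{l}_0$ together with each partial derivative $\partial_m=\partial/\partial\mathtt{l}_m$ for $1\leq m\leq r$, then use these operators to reduce any nonzero element of any nonzero submodule to a nonzero scalar.

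The first step is a direct commutator computation on the PBW basis. Using that $\mathtt{l}_{k+m}=0$ in $K_\lambda$ for $m\geq 1$ and $\mathtt{l}_{k-j+m}=\lambda_{k-j+m}$ in $K_\lambda$ for $r+1\leq k-j+m\leq k$, I expect to obtain that $\mathtt{l}_k$ acts on $K_\lambda$ as $\lambda_k T_k$, where $T_k$ is the shift $\mathtt{l}_0\mapsto\mathtt{l}_0-k$, and that on the $\mathtt{l}_0$-free subspace $\mathbb{C}[\mathtt{l}_1,\ldots,\mathtt{l}_r]\subset K_\lambda$ the operator $\mathtt{l}_{k-j}-\lambda_{k-j}$, for $1\leq j\leq r$, acts as the constant-coefficient differential operator
\[
D_j=(2j-k)\lambda_k\partial_j+\sum_{m=1}^{j-1}(m-(k-j))\lambda_{k-j+m}\partial_m.
\]
The hypothesis $r=\lceil k/2\rceil-1$ forces $2j<k$ for each $j\in\{1,\ldots,r\}$, so the pivot coefficient $(2j-k)\lambda_k$ is nonzero; consequently $D_1,\ldots,D_r$ form a triangular system whose $\mathbb{C}$-linear span coincides with that of $\partial_1,\ldots,\partial_r$.

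Given a nonzero submodule $M$ and $0\neq v\in M$ of maximal $\mathtt{l}_0$-degree $P$, I would apply $(\mathtt{l}_k-\lambda_k)^P=\lambda_k^P(T_k-1)^P$. Since $(T_k-1)^P$ annihilates every power $\mathtt{l}_0^n$ with $n<P$ and sends $\mathtt{l}_0^P$ to the nonzero constant $(-k)^P P!$, this produces a nonzero $u\in M\cap\mathbb{C}[\mathtt{l}_1,\ldots,\mathtt{l}_r]$. Let $\mathbf{i}^*=(i_1^*,\ldots,i_r^*)$ be the multi-index of the lex-maximal (with respect to $(i_r,i_{r-1},\ldots,i_1)$) monomial in $\mathrm{supp}(u)$. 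Using the triangular dependence, I construct a specific element $\Pi\in U(\mathfrak{V}_+)$, a polynomial in $\mathtt{l}_{k-1}-\lambda_{k-1},\ldots,\mathtt{l}_{k-r}-\lambda_{k-r}$, whose action on $\mathbb{C}[\mathtt{l}_1,\ldots,\mathtt{l}_r]$ equals $\partial_1^{i_1^*}\cdots\partial_r^{i_r^*}$. Lex-maximality of $\mathbf{i}^*$ implies that every other index $\mathbf{i}\in\mathrm{supp}(u)$ must have some component $i_m<i_m^*$, so the monomial $\mathtt{l}^{\mathbf{i}}$ is killed by $\partial_m^{i_m^*}$; hence $\Pi u=i_1^*!\cdots i_r^*!\,c_{\mathbf{i}^*}\in\mathbb{C}^\times$. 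Since $1$ generates $K_\lambda$ as a $\mathfrak{V}_+$-module, $M$ must equal $K_\lambda$.

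The main obstacle will be the commutator bookkeeping needed to justify the precise form of $D_j$ and to confirm the nonvanishing of the pivot $(2j-k)\lambda_k$ under the hypothesis $r=\lceil k/2\rceil-1$; once this is in place, the remainder combines the Ondrus--Wiesner shift trick with routine triangular-system linear algebra.
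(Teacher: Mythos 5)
Your proposal has a genuine gap in the key computational step. You claim that $\mathtt{l}_{k-j}-\lambda_{k-j}$ acts on the $\mathtt{l}_0$-free subspace $\mathbb{C}[\mathtt{l}_1,\dots,\mathtt{l}_r]$ as the \emph{first-order} constant-coefficient operator $D_j=(2j-k)\lambda_k\partial_j+\sum_{m=1}^{j-1}(m-(k-j))\lambda_{k-j+m}\partial_m$. This is false already in the smallest interesting case. Take $k=5$, so $r=2$ and $\lambda=(\lambda_3,\lambda_4,\lambda_5)$ with $\lambda_5\neq 0$. A direct commutator computation gives
\begin{displaymath}
(\mathtt{l}_3-\lambda_3)\,\mathtt{l}_1^2\cdot 1=-4\lambda_4\mathtt{l}_1+6\lambda_5,
\end{displaymath}
whereas your $D_2=-2\lambda_4\partial_1-\lambda_5\partial_2$ gives $D_2(\mathtt{l}_1^2)=-4\lambda_4\mathtt{l}_1$, missing the constant term $6\lambda_5$. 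The source of the discrepancy: when $\mathtt{l}_{k-j}$ is commuted past a factor $\mathtt{l}_m$ with $m<j$, the bracket produces $\mathtt{l}_{k-j+m}$ with $k-j+m<k$; this new generator is \emph{not} yet a scalar and must itself be commuted past the remaining factors of the monomial, picking up further first-order brackets before finally hitting $1$. The net effect is that $\mathtt{l}_{k-j}-\lambda_{k-j}$ is a differential operator of order up to $j$, not $1$. Consequently the ``triangular linear algebra'' that is supposed to produce $\Pi\in U(\mathfrak{V}_+)$ acting as $\partial_1^{i_1^*}\cdots\partial_r^{i_r^*}$ does not go through: no polynomial in the $\mathtt{l}_{k-j}-\lambda_{k-j}$ acts as a pure product of first derivatives, and the identity $\Pi u=i_1^*!\cdots i_r^*!\,c_{\mathbf{i}^*}$ fails.

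What does survive is the weaker statement that the \emph{leading term} of $(\mathtt{l}_{k-p}-\lambda_{k-p})v$, with respect to a suitable lexicographic total order on the exponent vectors $(i_0,\dots,i_r)$ and with $p=\min\{s:i_s\neq 0\}$, is $\mathbf{i}-\varepsilon_p$ with a nonzero coefficient (a nonzero multiple of $\lambda_k$ times the original leading coefficient); the extra higher-order contributions only produce strictly smaller monomials. This is precisely the paper's argument: it avoids writing down the differential operator in closed form, and instead descends on the leading term until reaching a nonzero scalar. Your plan can be repaired by replacing the exact identity ``$\mathtt{l}_{k-j}-\lambda_{k-j}$ acts as $D_j$'' by this leading-term estimate and arguing by induction on the order, but as written the proposal contains an incorrect claim at its core.
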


\begin{proof}
By the PBW Theorem, a basis of $K_{\lambda}$ is given by the images of
$\mathtt{l}_0^{i_0}\mathtt{l}_1^{i_1}\dots\mathtt{l}_r^{i_r}=:\mathtt{l}^\mathbf{i}$, where
$\mathbf{i}:=(i_0,i_1,\dots,i_r)\in\mathbb{Z}_+^{r+1}$. Let $<$ be the lexicographic order on $\mathbb{Z}_+^{r+1}$,
that is $\mathbf{0}=(0,0,\dots,0)$ is the minimum element and $\mathbf{i}<\mathbf{j}$ provided that
\begin{itemize}
\item $\min\{s:i_s\neq 0\}>\min\{s:j_s\neq 0\}$ or
\item $\min\{s:i_s\neq 0\}=\min\{s:j_s\neq 0\}=m$ and $\mathbf{i}-\varepsilon_m<\mathbf{j}-\varepsilon_m$.
\end{itemize}
Let $v\in K_{\lambda}$ be a nonzero linear combination of basis elements and $\mathbf{i}$ be the highest
term in $\mathrm{supp}(v)$ with respect to $<$. Set $p:=\min\{s:i_s\neq 0\}$. Similarly to Subsection~\ref{s1.4}
one shows that $(\mathtt{l}_{k-p}-\lambda_{k-p})v$ is nonzero with highest term $\mathbf{i}-\varepsilon_p$. Repeating
this process inductively (with respect to the degree of $\mathbf{i}$) we get that the submodule in $K_{\lambda}$
generated by $v$ contains an element with support $\{\mathbf{0}\}$. Any such element is the image of a
nonzero constant and hence generates $K_{\lambda}$. The claim follows.
\end{proof}

The module $K_{\lambda}$ obviously satisfies the conditions of Theorem~\ref{thmmain} (with the same $k$).
Hence, by Theorem~\ref{thmmain}, we obtain the corresponding simple induced $\mathfrak{V}$-module
$\mathrm{Ind}_{\theta}(K_{\lambda})$. These are exactly the
Whittaker modules over $\mathfrak{V}$ constructed in \cite[Theorem~6]{LGZ}.

\subsection{Whittaker modules of Felinska, Jaskolski and Kosztolowicz}\label{s3.4}

Consider some $k\in\mathbb{N}$, $k>1$, and choose $\lambda=(\lambda_{1},\lambda_{k})\in\mathbb{C}^{2}$ such that
$\lambda_{k}\neq 0$. Denote by $\tilde G_{\lambda}$ the $\mathfrak{V}_+$-module $U(\mathfrak{V}_+)/I$, where $I$ is
the left ideal generated by $\mathtt{l}_{1}-\lambda_{1}$, $\mathtt{l}_{k}-\lambda_{k}$, $\mathtt{l}_{k+1}$,
$\mathtt{l}_{k+2}$,\dots. The induced Virasoro module $\mathrm{Ind}_{\theta}(\tilde G_{\lambda})$ was
constructed in \cite[Section~III]{FJK}, where it was claimed that it is simple. Unfortunately, this is not the
case if $k\ge 4$ as we explain below:

By the PBW Theorem, a basis of $\tilde G_{\lambda}$ is given by the images of
$\mathtt{l}_0^{i_0}\mathtt{l}_2^{i_2}\dots\mathtt{l}_{k-1}^{i_{k-1}}=:\mathtt{l}^\mathbf{i}$,
where $\mathbf{i}:=(i_0,i_2,\dots,i_{k-1})\in\mathbb{Z}_+^{k-1}$. It
is easy to verify that the following vectors in $\tilde G_{\lambda}$
generate a proper nonzero submodule $I'$:
\begin{equation}\label{rel}a_2\mathtt{l}_{2}-
\mathtt{l}_{3}\mathtt{l}_{k-1},\,\, a_3\mathtt{l}_{3}-
\mathtt{l}_{4}\mathtt{l}_{k-1} ,\,\,\dots,
\,\,a_{k-2}\mathtt{l}_{k-2}-
\mathtt{l}_{k-1}\mathtt{l}_{k-1},
\end{equation}
where
\begin{displaymath}
(j-1)a_j=ja_{j+1}+(k-2)\lambda_k,\,\, j=2, 3,\dots, k-3;\quad
a_{k-2}=\frac{2(k-2)\lambda_k}{(k-3)}.
\end{displaymath}
Set $G_\lambda=\tilde G_\lambda/I'$. We can now prove the following result.

\begin{lemma}\label{lem32}
\begin{enumerate}[$($a$)$]
\item\label{lem32.1}
For $k=3$, the  $\mathfrak{V}_+$-module $\tilde G_{\lambda}$ is simple.
\item\label{lem32.2}  For $k=4$,  the  $\mathfrak{V}_+$-module $G_{\lambda}$ is simple.
\end{enumerate}
\end{lemma}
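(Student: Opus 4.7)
My plan is to mimic the lexicographic reduction of Lemma~\ref{lem31}: in each case I fix a PBW basis, equip its index set with a lex order, and exhibit two elements of $U(\mathfrak{V}_+)$ (each annihilating the cyclic generator $\bar 1$) whose action strictly decreases the leading term of any nonzero element. Throughout I use the fact that $\mathtt{l}_j\bar w=0$ for every $w\in\tilde G_\lambda$ and every $j\geq k+1$, which follows inductively from $\mathtt{l}_j\bar 1=0$ and the bracket $[\mathtt{l}_j,\mathtt{l}_i]\in\mathbb{C}\mathtt{l}_{i+j}$.

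For part~\eqref{lem32.1} ($k=3$), the PBW basis is $\{\mathtt{l}_0^{i_0}\mathtt{l}_2^{i_2}\bar 1:(i_0,i_2)\in\mathbb{Z}_+^2\}$, ordered lex with $i_2$ as primary key. Direct commutator calculations (using $\mathtt{l}_3\bar 1=\lambda_3\bar 1$ and that $\mathtt{l}_3$ commutes with $\mathtt{l}_2$ on $\bar 1$) yield
\[
(\mathtt{l}_3-\lambda_3)\mathtt{l}_0^{i_0}\mathtt{l}_2^{i_2}\bar 1=\lambda_3\bigl((\mathtt{l}_0-3)^{i_0}-\mathtt{l}_0^{i_0}\bigr)\mathtt{l}_2^{i_2}\bar 1
\]
and
\[
(\mathtt{l}_1-\lambda_1)\mathtt{l}_0^{i_0}\mathtt{l}_2^{i_2}\bar 1=\lambda_1\bigl((\mathtt{l}_0-1)^{i_0}-\mathtt{l}_0^{i_0}\bigr)\mathtt{l}_2^{i_2}\bar 1+i_2\lambda_3(\mathtt{l}_0-1)^{i_0}\mathtt{l}_2^{i_2-1}\bar 1.
\]
Since $\lambda_3\neq 0$, the first strictly lowers the $\mathtt{l}_0$-degree of the leading term while fixing its $\mathtt{l}_2$-degree, and the second strictly lowers the leading $\mathtt{l}_2$-degree whenever the $\mathtt{l}_0$-degree there is zero. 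Alternating these on any nonzero $v$ terminates at a nonzero scalar multiple of $\bar 1$, which generates $\tilde G_\lambda$.

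For part~\eqref{lem32.2} ($k=4$), the direct analogue fails at the intermediate index $p=2$ since $\mathtt{l}_{k-p}=\mathtt{l}_2$ is not scalar on $\bar 1$. Passing to $G_\lambda$ replaces $\mathtt{l}_2\bar 1$ by $(4\lambda_4)^{-1}\mathtt{l}_3^2\bar 1$ and removes the obstruction. Writing $r:=(\mathtt{l}_3^2-4\lambda_4\mathtt{l}_2)\bar 1$, one checks $\mathtt{l}_1 r=\lambda_1 r$, $\mathtt{l}_4 r=\lambda_4 r$, $\mathtt{l}_j r=0$ for $j\geq 5$, and $\mathtt{l}_3^{j_3}\mathtt{l}_2\bar 1=\mathtt{l}_2\mathtt{l}_3^{j_3}\bar 1$ in $\tilde G_\lambda$; hence $I'$ is the $\mathbb{C}$-linear span of
\[
\mathtt{l}_0^{j_0}\mathtt{l}_2^{j_2}\mathtt{l}_3^{j_3+2}\bar 1-4\lambda_4\mathtt{l}_0^{j_0}\mathtt{l}_2^{j_2+1}\mathtt{l}_3^{j_3}\bar 1,\qquad (j_0,j_2,j_3)\in\mathbb{Z}_+^3.
\]
Iterating the congruence $\mathtt{l}_2\mathtt{l}_3^{i_3}\bar 1\equiv(4\lambda_4)^{-1}\mathtt{l}_3^{i_3+2}\bar 1\pmod{I'}$ shows $\{\mathtt{l}_0^{i_0}\mathtt{l}_3^{i_3}\bar 1:(i_0,i_3)\in\mathbb{Z}_+^2\}$ spans $G_\lambda$; a bookkeeping induction on the coefficients in the above spanning set of $I'$ (using $\lambda_4\neq 0$) shows it is a basis. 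The argument of part~\eqref{lem32.1} then transports, with $(\mathtt{l}_4-\lambda_4)$ reducing the $\mathtt{l}_0$-degree and $(\mathtt{l}_1-\lambda_1)\mathtt{l}_3^{i_3}\bar 1=2i_3\lambda_4\mathtt{l}_3^{i_3-1}\bar 1$ reducing the $\mathtt{l}_3$-degree.

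The main obstacle is the basis claim in part~\eqref{lem32.2}: expanding a relation $\sum c_{i_0,i_3}\mathtt{l}_0^{i_0}\mathtt{l}_3^{i_3}\bar 1=\sum\alpha_{j_0,j_2,j_3}\bigl(\mathtt{l}_0^{j_0}\mathtt{l}_2^{j_2}\mathtt{l}_3^{j_3+2}\bar 1-4\lambda_4\mathtt{l}_0^{j_0}\mathtt{l}_2^{j_2+1}\mathtt{l}_3^{j_3}\bar 1\bigr)$ in the PBW basis of $\tilde G_\lambda$ and comparing coefficients, the equations indexed by $(i_0,i_2,i_3)$ with $i_2\geq 1$ and $i_3\in\{0,1\}$ force $\alpha_{j_0,j_2,j_3}=0$ for $j_3\in\{0,1\}$, and the recursion $\alpha_{i_0,i_2,i_3-2}=4\lambda_4\alpha_{i_0,i_2-1,i_3}$ coming from the $i_2\geq 1,\,i_3\geq 2$ equations then propagates the vanishing throughout, forcing all $\alpha$'s and hence all $c$'s to vanish. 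Once this basis claim is settled, the iterative reductions terminate and simplicity of $G_\lambda$ follows.
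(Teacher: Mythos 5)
Your part~(a) is essentially the paper's proof: same PBW basis $\{\mathtt{l}_0^{i_0}\mathtt{l}_2^{i_2}\bar 1\}$, same lex order with $i_0$ primary, same reduction first by $(\mathtt{l}_3-\lambda_3)$ (which is $\mathtt{l}_k-\lambda_k$, hence $\lambda_3\neq 0$) and then by $(\mathtt{l}_1-\lambda_1)$.

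For part~(b) you take a genuinely different route, and I think a cleaner one. The paper parametrizes $G_\lambda$ by $\{\mathtt{l}_0^{i_0}\mathtt{l}_2^{i_2}\mathtt{l}_3^{i_3}\bar 1 : i_3\in\{0,1\}\}$ and the induction then needs the case split $p=2$ versus $p\neq 2$, with a two-step decrease that temporarily raises the weight before the degree drops. You instead trade $\mathtt{l}_2$ for $\mathtt{l}_3^2/(4\lambda_4)$ throughout, obtaining the basis $\{\mathtt{l}_0^{i_0}\mathtt{l}_3^{i_3}\bar 1\}$; because $\mathtt{l}_4\bar 1=\lambda_4\bar 1$ and $(\mathtt{l}_1-\lambda_1)\mathtt{l}_3^{i_3}\bar 1 = 2i_3\lambda_4\mathtt{l}_3^{i_3-1}\bar 1$, the reduction is then literally the same alternation as part~(a), with $(\mathtt{l}_4-\lambda_4)$ killing $\mathtt{l}_0$-degree and $(\mathtt{l}_1-\lambda_1)$ killing $\mathtt{l}_3$-degree, and no case split. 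The price is that you have to justify the basis claim, which you do by exhibiting the explicit spanning set $\{\mathtt{l}_0^{j_0}\mathtt{l}_2^{j_2}\mathtt{l}_3^{j_3+2}\bar 1 - 4\lambda_4\mathtt{l}_0^{j_0}\mathtt{l}_2^{j_2+1}\mathtt{l}_3^{j_3}\bar 1\}$ of $I'$ and a coefficient induction along the invariant $2j_2+j_3$. I checked the stability of that span under $\mathtt{l}_0,\dots,\mathtt{l}_4$ (the $\mathtt{l}_1$-stability requires a small cancellation $2(j_3-2j_2)\lambda_4 + (4j_2-2j_3)\lambda_4=0$, which works out) and the recursion $\alpha_{j_0,j_2,j_3}=4\lambda_4\alpha_{j_0,j_2-1,j_3+2}$ together with the vanishing at $j_3\in\{0,1\}$, so the basis claim is sound. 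Your presentation even makes explicit the verification that $I'$ is a proper nonzero submodule, which the paper asserts as ``easy to verify.'' In short: correct, and part~(b) is a nicer argument than the paper's, at the cost of a slightly longer preliminary bookkeeping step about $I'$.
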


\begin{proof}
By the PBW Theorem, a basis of $\tilde G_{\lambda}$ is given by the images of
$\mathtt{l}_0^{i_0}\mathtt{l}_2^{i_2} =:\mathtt{l}^\mathbf{i}$, where $\mathbf{i}:=(i_0,i_2 )\in\mathbb{Z}_+^{2}$.
Let $<$ be the lexicographic order on $\mathbb{Z}_+^{2}$ defined as in the proof of Lemma~\ref{lem31}. Let
$v\in \tilde G_{\lambda}$ be a nonzero linear combination of basis elements and $\mathbf{i}$ be the highest term
in $\mathrm{supp}(v)$ with respect to $<$. A direct calculation shows that the
leading term  of $v'=(\mathtt{l}_{3}-\lambda_3)^{i_0}v$ has the
form $m\varepsilon_2$ for some $m\in\mathbb{Z}_+$. Then $(\mathtt{l}_{1}-\lambda_1)^{m}v'$ is a nonzero constant
and hence generates $\tilde G_{\lambda}$. Claim \eqref{lem32.1} follows.

By the PBW Theorem and (\ref{rel}), a basis of $G_{\lambda}$ is given by the images of
\begin{equation}\label{rel2}
\mathtt{l}_0^{i_0}\mathtt{l}_2^{i_2}\mathtt{l}_{3}^0
=:\mathtt{l}^\mathbf{i},\,\,\,\,\,\,
\mathtt{l}^\mathbf{i}\mathtt{l}_{3},
\end{equation} where
$\mathbf{i}:=(i_0,i_2,0)\in\mathbb{Z}_+^{3}$. Let $<$ be the total
order on $\mathbb{Z}_+^{3}$, that is $\mathbf{0}=(0,0,0)$ is the
minimum element and $\mathbf{i}<\mathbf{j}$ provided that
\begin{itemize}
\item $\min\{s:i_s\neq 0\}>\min\{s:j_s\neq 0\}$ or
\item $\min\{s:i_s\neq 0\}=\min\{s:j_s\neq 0\}=m$ and $\mathbf{i}-\varepsilon_m<\mathbf{j}-\varepsilon_m$.
\end{itemize}
Let $v\in G_{\lambda}$ be a nonzero linear combination of basis elements and $\mathbf{i}$ be the
highest term in $\mathrm{supp}(v)$ with respect to $<$. Note that $\mathbf{i}$ has a special form because of \eqref{rel2}. The leading term $\mathbf{i}'$ of $v'=(\mathtt{l}_{4}-\lambda_4)^{i_0}v$ has the property that $i'_0=0$.
Set
\begin{displaymath}
p:=\max\{s>0:i'_s\neq 0\}.
\end{displaymath}
Similarly to Subsection~\ref{s1.4} one shows that $(\mathtt{l}_{1}-\lambda_1)v$ is nonzero with highest term
\begin{displaymath}
\begin{cases}
\mathbf{i}'-\varepsilon_2+\varepsilon_{3}, & p=2;\\
\mathbf{i}'-\varepsilon_{3}, & \text{otherwise}.
\end{cases}
\end{displaymath}
Repeating this process inductively (with respect to both the degree and the weight of $\mathbf{i}$) we get that the
submodule in $G_{\lambda}$ generated by $v$ contains the generator $1$ of $G_\lambda$. Claim \eqref{lem32.2} follows.
\end{proof}

If $k>4$, one can show that the modules $G_{\lambda}$ are still reducible.
In the cases described by Lemma~\ref{lem32}, the module $G_{\lambda}$ (resp. $\tilde{G}_{\lambda}$) obviously
satisfies the conditions of Theorem~\ref{thmmain} giving us the corresponding simple induced modules.

\subsection{Other Whittaker modules}\label{s3.9}

Fix $k\in\mathbb{N}$. Choose a pair $(S,\lambda)$, where $S\subset\{1,2,\dots,k\}$ and
$\lambda=(\lambda_i)_{i\in S}\in\mathbb{C}^{|S|}$, such that the following conditions are satisfied:
\begin{enumerate}[(I)]
\item\label{cond1} $k\in S$ and $\lambda_k\neq 0$;
\item\label{cond2} for all $i,j\in S$, $i\neq j$, we either have $i+j>k$ or $i+j\in S$ and $\lambda_{i+j}=0$;
\item\label{cond3}
for all $j\in \{1,2,\dots,k\}\setminus S$, we have $k-j\in S$.
\end{enumerate}
One example
is $S=\{\lceil\frac{k}{2}\rceil,\lceil\frac{k}{2}\rceil+1,\dots,k\}$
and any $\lambda$ with  $\lambda_k\neq 0$ (confer
Subsection~\ref{s3.3}). Another example is $S=\{2,4,5\}$ for $k=5$
and any $\lambda$ with  $\lambda_5\neq 0$. Our final example is
$S=\{3,4,6,7,8\}$ for $k=8$ and any $\lambda$ with $\lambda_8\neq 0$
and $\lambda_7=0$ (note that here we have $3+4\in S$). Denote by
$Q_{\lambda}$ the $\mathfrak{V}_+$-module $U(\mathfrak{V}_+)/I$,
where $I$ is the left ideal generated by
$\mathtt{l}_{i}-\lambda_{i}$, $i\in S$, and $\mathtt{l}_{k+1}$,
$\mathtt{l}_{k+2}$,\dots. Condition \eqref{cond2} guarantees that
the module $Q_{\lambda}$ is nonzero.

\begin{lemma}\label{lem39}
The  $\mathfrak{V}_+$-module $Q_{\lambda}$ is simple.
\end{lemma}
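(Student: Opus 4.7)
The plan is to mimic Lemma~\ref{lem31} and the core argument of Subsection~\ref{s1.4}. By the PBW theorem, a basis of $Q_\lambda$ consists of the images of $\mathtt{l}^{\mathbf{i}}:=\prod_{s\in T}\mathtt{l}_s^{i_s}$ (taken in increasing order of $s$), where $T:=\{0,1,\ldots,k\}\setminus S$ and $\mathbf{i}=(i_s)_{s\in T}\in\mathbb{Z}_+^T$; condition \eqref{cond2} is exactly what is needed to make the defining relations of $I$ consistent in the quotient, ensuring this is indeed a basis (and in particular that $Q_\lambda\neq 0$). I equip $\mathbb{Z}_+^T$ with the reverse lexicographic order of Subsection~\ref{s1.3}.

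Take a nonzero $v\in Q_\lambda$ and write it as $\sum_{\mathbf{j}}c_{\mathbf{j}}\mathtt{l}^{\mathbf{j}}\cdot 1$; let $\mathbf{i}$ be its leading term and set $p:=\min\{s\in T: i_s>0\}$. If $\mathbf{i}=\mathbf{0}$ then $v$ is a nonzero multiple of the cyclic generator $1$ and there is nothing to prove, so assume $\mathbf{i}\neq\mathbf{0}$. I would act on $v$ by $\mathtt{l}_{k-p}-\lambda_{k-p}$; this operator is well-defined because $k-p\in S$, which follows from \eqref{cond1} when $p=0$ (giving $k-p=k$) and from \eqref{cond3} when $0<p<k$ (note $p\neq k$ since $k\in S$).

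The key structural observation is that every $\mathtt{l}_n$ with $n>k$ acts as zero on $Q_\lambda$, so $[\mathtt{l}_{k-p},\mathtt{l}_q]=(q-k+p)\mathtt{l}_{k-p+q}$ vanishes on $Q_\lambda$ whenever $q>p$. Consequently $\mathtt{l}_{k-p}$ commutes with each $\mathtt{l}_q$, $q>p$, as an operator on $Q_\lambda$; likewise $\mathtt{l}_k$ commutes with every $\mathtt{l}_q$ for $q>0$ and acts on the generator $1$ as $\lambda_k$. The only nontrivial relation that survives is $[\mathtt{l}_{k-p},\mathtt{l}_p]=(2p-k)\mathtt{l}_k$, and $2p-k\neq 0$: otherwise $p=k/2$ would put $p=k-p$ simultaneously in $T$ and $S$, contradicting \eqref{cond3}. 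Carrying out the calculation as in Subsection~\ref{s1.4} yields, for each $\mathbf{j}\in\mathrm{supp}(v)$ with $\min\{s:j_s>0\}\geq p$,
\begin{displaymath}
(\mathtt{l}_{k-p}-\lambda_{k-p})\mathtt{l}^{\mathbf{j}}\cdot 1=\begin{cases}
0, & \min\{s:j_s>0\}>p,\\
(2p-k)\lambda_k\,j_p\,\mathtt{l}^{\mathbf{j}-\varepsilon_p}\cdot 1, & \min\{s:j_s>0\}=p>0
\end{cases}
\end{displaymath}
(with an analogous leading contribution $-k\,j_0\,\lambda_k\,\mathtt{l}^{\mathbf{j}-\varepsilon_0}\cdot 1$ plus strictly lower corrections when $p=0$). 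Since $\lambda_k\neq 0$, $i_p>0$, and $\mathbf{j}<\mathbf{i}$ with common leading index $p$ forces $\mathbf{j}-\varepsilon_p<\mathbf{i}-\varepsilon_p$, the leading term of $(\mathtt{l}_{k-p}-\lambda_{k-p})v$ is $\mathbf{i}-\varepsilon_p$, of strictly smaller total degree than $\mathbf{i}$.

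Iterating this reduction terminates after at most $\sum_s i_s$ steps in a nonzero scalar multiple of $1$; the submodule of $Q_\lambda$ generated by $v$ therefore contains $1$ and so equals $Q_\lambda$. I expect the main obstacle to be the bookkeeping in the key commutation step — verifying that contributions from support elements $\mathbf{j}<\mathbf{i}$ cannot accidentally produce new terms as large as $\mathbf{i}-\varepsilon_p$ — but this is forced by the vanishing of $\mathtt{l}_n$ on $Q_\lambda$ for $n>k$ together with the recursive nature of the reverse lex order.
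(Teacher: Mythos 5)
Your argument is correct and follows the same leading-term reduction that the paper uses for Lemma~\ref{lem39}, whose proof is simply cited as being analogous to that of Lemma~\ref{lem31} (the $K_\lambda$ case): fix the reverse lexicographic leading term $\mathbf{i}$ with minimal nonzero index $p$, apply $\mathtt{l}_{k-p}-\lambda_{k-p}$, and induct on the degree of the leading term. You supply the details the paper suppresses, in particular that condition~\eqref{cond3} is exactly what guarantees $k-p\in S$ and that $2p\neq k$, and your commutation formulas and the check that no smaller support element produces a term $\geq\mathbf{i}-\varepsilon_p$ are all accurate.
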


\begin{proof}
The proof is similar to that of Lemma 7. We omit the details.
\end{proof}

The module $Q_{\lambda}$ obviously satisfies the conditions of
Theorem~\ref{thmmain} (with the same $k$ because of condition
\eqref{cond1}). Hence, by Theorem~\ref{thmmain}, we obtain the
corresponding simple induced $\mathfrak{V}$-module
$\mathrm{Ind}_{\theta}(Q_{\lambda})$. As far as we can judge, if $S$
is not of the form
$\{\lceil\frac{k}{2}\rceil,\lceil\frac{k}{2}\rceil+1,\dots,k\}$,
then the simple $\mathfrak{V}$-module
$\mathrm{Ind}_{\theta}(Q_{\lambda})$ is new.

The following remark is suggested by Hongjia Chen.

\begin{remark}\label{remerre}
If $S\subset\{1,2,\dots,k\}$ does not satisfy \eqref{cond3}, then there exists $\lambda \in \C^{|S|}$ 
satisfying both \eqref{cond1} and \eqref{cond2} such that $Q_{\lambda}$ is reducible as a $\mathfrak{V}_+$-module.
\end{remark}

\begin{proof} 
Take a nonzero $\lambda $ such that $\lambda_i = 0$ for all $i \in S \setminus\{k\}$, we will show that the 
corresponding $Q_{\lambda}$ is reducible. Let $r$ be the minimal integer such that 
$r, k-r \in \{1,\dots,k\} \setminus S$. For any $i \in S$ (note that $i\ne r$), the equality 
$(\mathtt{l}_i - \lambda_i)\mathtt{l}_{k-r}v = [\mathtt{l}_i,\mathtt{l}_{k-r}]v \neq 0$ implies 
$k - (r - i) \in \{1,\dots,k\} \setminus S$. By the assumption on $r$, this implies $r - i \in S$. 
If $r-i \neq i$, then we have $r = (r-i)+i \in S$ using  \eqref{cond2}, which is a contradiction. Thus $i=r/2$.

{\bf Case 1:}  $\frac{r}{2} \notin S$. We see that $(\mathtt{l}_i -\lambda_i)\mathtt{l}_{k-r}v = 0$ for all 
$i \in S$. So $\mathtt{l}_{k-r}v $ generates a proper nonzero submodule of $Q_\lambda$.

{\bf Case 2:}   Now $s = \frac{r}{2} \in S$. Then $(\mathtt{l}_i -\lambda_i)\mathtt{l}_{k-2s}v = 0$ for all 
$i \in S$ with $i \neq s$. Let $w = (k-3s)\mathtt{l}_{k-s}^2v - 2 \lambda_k (k-2s)\mathtt{l}_{k-2s}v$. 
It is easy to check that $(\mathtt{l}_s - \lambda_s)w=0$.

For all $i \in S$ with $i \neq s$, we see that 
\begin{displaymath}
(\mathtt{l}_i - \lambda_i)w=(k-3s)(\mathtt{l}_i -\lambda_i)\mathtt{l}_{k-s}^2v = 2(k-3s)(k-s-i)
\mathtt{l}_{k-s}\mathtt{l}_{k-s+i}v 
\end{displaymath}
since $2k-2s+i>k$. If $\mathtt{l}_{k-s+i}v\neq 0$, then $k-s+i \in \{1,\dots,k\} \setminus S$ and thus 
$s-i \in S$ since $s<r$. Consequently $2s = s + (s-i) +i \in S$ which is a contradiction. Thus
$(\mathtt{l}_i - \lambda_i)\mathtt{l}_{k-2s}v = 0$ for all $i \in S$, and $w$ generates a proper nonzero 
submodule of $Q_\lambda$.
\end{proof}

\subsection{Block modules}\label{s3.5}

Denote by $\mathfrak{b}$ the two-dimensional Lie algebra $\mathfrak{V}_+/\mathfrak{V}_+^{(1)}$. Let
$N$ be a simple infinite dimensional $\mathfrak{b}$-module. Then $N$ has the induced structure of a
simple $\mathfrak{V}_+$-module, which obviously satisfies the conditions of Theorem~\ref{thmmain} (with $k=1$).
Hence, by Theorem~\ref{thmmain}, we obtain the corresponding simple induced $\mathfrak{V}$-module
$\mathrm{Ind}_{\theta}(N)$.
A classification of simple $\mathfrak{b}$-modules is obtained by Block in \cite[Section~6]{Bl} (this includes a very
explicit family of simple modules proposed by Arnal and Pinczon in \cite[Section~5]{AP}). As far as we can judge,
if we leave out weight and Whittaker modules (a ``negligible'' set in Block's classification), all other
simple $\mathfrak{V}$-modules obtained in this way are new.

\subsection{Block modules for $k=2$}\label{s3.6}

Denote by $\mathfrak{c}$ the three-dimensional Lie algebra $\mathfrak{V}_+/\mathfrak{V}_+^{(2)}$. Note that
the algebra $\mathfrak{b}$ from the previous subsection is a subalgebra of $\mathfrak{c}$. Abusing notation,
we identify $\mathtt{l}_i$, $i=0,1,2$, with their images in both $\mathfrak{b}$ and $\mathfrak{c}$.
For any $\lambda\in \mathbb{C}$, mapping
\begin{displaymath}
\mathtt{l}_0\to \mathtt{l}_0,\quad \mathtt{l}_1\to \mathtt{l}_1,\quad \mathtt{l}_2\to \lambda\mathtt{l}_1^2
\end{displaymath}
extends to a surjective map $\varphi_{\lambda}:U(\mathfrak{c})\tto U(\mathfrak{b})$. Let
\begin{displaymath}
\overline{\varphi}_{\lambda}:\mathfrak{b}\text{-}\mathrm{mod}\to \mathfrak{c}\text{-}\mathrm{mod}
\end{displaymath}
denote the induced pullback functor. As $\varphi_{\lambda}$ is an epimorphism,
$\overline{\varphi}_{\lambda}$ maps simple modules
to simple modules. Let $L$ be a simple $\mathfrak{b}$-module (from \cite[Section~6]{Bl}) and
$N=\overline{\varphi}_{\lambda}(L)$, where $\lambda\neq 0$ (it is easy to see that different $\lambda$ give
non-isomorphism modules). Then $N$ has the induced structure of a
simple $\mathfrak{V}_+$-module, which obviously satisfies the conditions of Theorem~\ref{thmmain} (with $k=2$).
Hence, by Theorem~\ref{thmmain},  we obtain the corresponding simple induced $\mathfrak{V}$-module
$\mathrm{Ind}_{\theta}(N)$. As far as we can judge, most  of these simple $\mathfrak{V}$-modules are new.

Let $\psi:\mathfrak{c}\to \mathfrak{b}$ be the unique Lie algebra epimorphism which sends
\begin{displaymath}
\mathtt{l}_0\to \frac{1}{2}\mathtt{l}_0,\quad \mathtt{l}_1\to 0,\quad \mathtt{l}_2\to \mathtt{l}_1.
\end{displaymath}
Let $\overline{\psi}:\mathfrak{b}\text{-}\mathrm{mod}\to \mathfrak{c}\text{-}\mathrm{mod}$ be the induced
pullback functor, which again sends simple modules to simple modules. Similarly to the previous paragraph, for any
simple $\mathfrak{b}$-module  $L$ and any $\theta\in\mathbb{C}$ we get the simple module
$\mathrm{Ind}_{\theta}(\overline{\psi}(L))$. Moreover, we have the following:

\begin{proposition}\label{prop25}
Every simple $\mathfrak{c}$-module $N$ is isomorphic either to $\overline{\varphi}_{\lambda}(L)$  for some
$\lambda\in\mathbb{C}$ and some simple $\mathfrak{b}$-module $L$ or to $\overline{\psi}(L)$ for some
simple $\mathfrak{b}$-module $L$.
\end{proposition}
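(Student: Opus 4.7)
The plan is to analyze how $\mathtt{l}_1$ acts on a simple $\mathfrak{c}$-module $N$ and split into two mutually exhaustive cases. The key preliminary observation is that $\mathbb{C}\mathtt{l}_1$ is an ideal of $\mathfrak{c}$: one has $[\mathtt{l}_0,\mathtt{l}_1]=\mathtt{l}_1$ and $[\mathtt{l}_1,\mathtt{l}_2]=0$ in $\mathfrak{c}$, so $\ker(\mathtt{l}_1)\subseteq N$ is a $\mathfrak{c}$-submodule (if $\mathtt{l}_1v=0$ then $\mathtt{l}_1\mathtt{l}_0v=\mathtt{l}_0\mathtt{l}_1v-\mathtt{l}_1v=0$ and $\mathtt{l}_1\mathtt{l}_2v=\mathtt{l}_2\mathtt{l}_1v=0$). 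By simplicity of $N$, either $\mathtt{l}_1$ annihilates $N$ or it acts injectively. In the first case the action factors through the two-dimensional quotient $\mathfrak{c}/\mathbb{C}\mathtt{l}_1$, which is precisely the target of $\psi$ up to the indicated rescaling, and one reads off $N\cong\overline{\psi}(L)$ for the induced simple $\mathfrak{b}$-module $L$.

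For the remaining case, assume $\mathtt{l}_1$ acts injectively on $N$. Then $\mathtt{l}_1 N$ is a non-zero $\mathfrak{c}$-submodule (the identities $\mathtt{l}_0\cdot\mathtt{l}_1v=\mathtt{l}_1(\mathtt{l}_0v+v)$ and $\mathtt{l}_2\cdot\mathtt{l}_1v=\mathtt{l}_1\mathtt{l}_2v$ check this at once), hence simplicity gives $\mathtt{l}_1 N=N$ and $\mathtt{l}_1$ is bijective on $N$. I then introduce the linear operator $T:=\mathtt{l}_1^{-2}\mathtt{l}_2$ on $N$ and claim it is a $\mathfrak{c}$-endomorphism. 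Its commutativity with $\mathtt{l}_1$ and $\mathtt{l}_2$ is immediate from $[\mathtt{l}_1,\mathtt{l}_2]=0$; commutativity with $\mathtt{l}_0$ follows from the identity $\mathtt{l}_0\mathtt{l}_1^{-2}=\mathtt{l}_1^{-2}\mathtt{l}_0-2\mathtt{l}_1^{-2}$, derived in turn from $[\mathtt{l}_0,\mathtt{l}_1^2]=2\mathtt{l}_1^2$, together with $[\mathtt{l}_0,\mathtt{l}_2]=2\mathtt{l}_2$; the two correction terms cancel exactly.

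Once $T$ is an endomorphism, Dixmier's version of Schur's lemma finishes the job: $N$ is a cyclic module over the countable-dimensional algebra $U(\mathfrak{c})$, hence at most countable-dimensional over the uncountable field $\mathbb{C}$, so $T=\lambda\cdot\mathrm{id}_N$ for some $\lambda\in\mathbb{C}$. This means $\mathtt{l}_2=\lambda\mathtt{l}_1^2$ as operators on $N$, so the $\mathfrak{c}$-action factors through $\varphi_{\lambda}:U(\mathfrak{c})\tto U(\mathfrak{b})$, which yields $N\cong\overline{\varphi}_{\lambda}(L)$ with $L:=N$ regarded as a (necessarily simple) $\mathfrak{b}$-module. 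I expect the only genuine obstacle to be the verification that $T$ is indeed a $\mathfrak{c}$-endomorphism, since that is the single step that requires computing with the inverse of $\mathtt{l}_1$; everything else is the submodule dichotomy for $\mathtt{l}_1$ and a standard application of Schur's lemma.
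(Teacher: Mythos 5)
Your proof is correct and uses the same underlying ideas (submodule dichotomy plus a central element in a localization, killed by Schur's lemma), but the case structure is different and arguably cleaner. The paper splits into three cases: $\mathtt{l}_2$ has nontrivial kernel (giving $\mathtt{l}_2N=0$, hence $\overline{\varphi}_0$), $\mathtt{l}_1$ has nontrivial kernel (giving $\overline{\psi}$), and both injective; in the third case the paper localizes at $\mathtt{l}_2$, observes that $\mathtt{l}_1^2\mathtt{l}_2^{-1}$ is central in the localization, and concludes it acts by a nonzero scalar. You instead dichotomize only on $\mathtt{l}_1$, which suffices because inverting $\mathtt{l}_1$ rather than $\mathtt{l}_2$ makes the central element $\mathtt{l}_1^{-2}\mathtt{l}_2$ meaningful even when $\mathtt{l}_2$ is not injective: the scalar $\lambda$ is then simply allowed to be $0$, and the paper's first case ($\mathtt{l}_2N=0$) is absorbed into the generic case as $\lambda=0$. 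The paper invokes that centers of finitely generated algebras act by scalars on simple modules; you invoke the countable-dimensional version of Schur's lemma directly for the operator $T$ on $N$ — equivalent tools. One small remark: it is worth stating explicitly at the end that once $\mathtt{l}_2=\lambda\mathtt{l}_1^2$ on $N$, any $\mathfrak{b}$-submodule is automatically $\mathtt{l}_2$-stable, so the restriction $L=N|_{\mathfrak{b}}$ is simple; you assert this parenthetically but it is the hinge of the identification $N\cong\overline{\varphi}_\lambda(L)$.
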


\begin{proof}
First we assume that $N$ contains a nonzero $v$ such that $\mathtt{l}_2v=0$. Then every element in $N$ can be
written in the form $\sum_{i,j\geq 0}\alpha_{i,j}\mathtt{l}_0^i\mathtt{l}_1^j v$ where
$\alpha_{i,j}\in\mathbb{C}$ and we have
\begin{displaymath}
\mathtt{l}_2\left(\sum_{i,j\geq 0}\alpha_{i,j}\mathtt{l}_0^i\mathtt{l}_1^j v\right)=
\sum_{i,j\geq 0}\alpha_{i,j}(\mathtt{l}_0-2)^i\mathtt{l}_1^j \mathtt{l}_2 v=0,
\end{displaymath}
which means that $\mathtt{l}_2 N=0$. It follows that the restriction $L$ of $N$ to $\mathfrak{b}$
is simple and $N\cong \overline{\varphi}_{0}(L)$.

Assume now that $N$ contains a nonzero $v$ such that $\mathtt{l}_1v=0$. The same arguments as in the previous
paragraph show that $\mathtt{l}_1N=0$, which implies that $N\cong \overline{\psi}(L)$
for some simple  $\mathfrak{b}$-module $L$.

Finally, assume that both $\mathtt{l}_2$ and $\mathtt{l}_1$ act injectively on $N$.
Let $A$ denote the localization of $U(\mathfrak{c})$ with respect to powers of $\mathtt{l}_2$.
The element $\mathtt{l}_1^2\mathtt{l}_2^{-1}$ belongs to the center of $A$ and is nonzero. If $N$ is a simple
$\mathfrak{c}$-module on which $\mathtt{l}_2$ acts injectively, then $N$ localizes to a simple
$A$-module $N'$. As $A$ is finitely generated (it is generated by  $\mathtt{l}_0$, $\mathtt{l}_1$,
$\mathtt{l}_2$ and $\mathtt{l}_2^{-1}$), the central element $\mathtt{l}_1^2\mathtt{l}_2^{-1}$ must act
as some scalar, say $\lambda\in\mathbb{C}\setminus\{0\}$, on $N'$ (confer the proof of \cite[Theorem~4.7]{Ma}).
This means that $\mathtt{l}_1^2-\lambda\mathtt{l}_2$ annihilates $N$. Again, it follows that the
restriction $L$ of $N$ to $\mathfrak{b}$  is simple and $N\cong \overline{\varphi}_{\mu}(L)$,
where $\mu=\lambda^{-1}$.
\end{proof}

\section{Revision of the Whittaker setup}\label{s4}

In this section we would like to take the opportunity to extend and correct the Whittaker setup proposed in \cite{BM}.

\subsection{Whittaker pairs and Whittaker modules}\label{s4.1}

Let $\mathfrak{n}$ be a Lie algebra. Set $\mathfrak{n}_0:=\mathfrak{n}$ and define recursively
$\mathfrak{n}_{i}:=[\mathfrak{n}_{i-1},\mathfrak{n}]$, $i\in\mathbb{N}$. We will
call the algebra $\mathfrak{n}$ {\em quasi-nilpotent} provided that the descending chain
$\mathfrak{n}_0\supset \mathfrak{n}_1\supset\dots$ of ideal has zero intersection and each $\mathfrak{n}_{i}$
has finite codimension in $\mathfrak{n}$ (confer \cite[3.1]{BM} where the last condition is missing).
Denote by $\mathfrak{F}_{\mathfrak{n}}$ the category of all finite dimensional $\mathfrak{n}$-modules
and by $\mathfrak{W}_{\mathfrak{n}}$ the full subcategory of $\mathfrak{F}_{\mathfrak{n}}$ consisting of
all modules $V$ for which there is $i\in\mathbb{N}$ such that $\mathfrak{n}_{i} V=0$. In
\cite[Proposition~3]{BM} it is claimed that every simple finite dimensional $\mathfrak{n}$-module belongs
to $\mathfrak{W}_{\mathfrak{n}}$. This is wrong as shown by the following example:

\begin{example}\label{ex53}
{\rm
Let $\mathfrak{a}$ be any simple finite dimensional Lie algebra and $A:=t\mathbb{C}[t]$ be the associative algebra of polynomials without constant term. Then the Lie algebra $\mathfrak{n}:=A\otimes \mathfrak{a}$ with the Lie
bracket given by $[f\otimes a,g\otimes b]:=fg\otimes[a,b]$ is quasi-nilpotent with
$\mathfrak{n}_i=t^{i+1}\mathbb{C}[t]\otimes \mathfrak{a}$. Any simple $\mathfrak{a}$-module $V$ becomes a
simple $\mathfrak{n}$-module via the evaluation map sending $t$ to $1$.
}
\end{example}

This suggests the following revision of the Whittaker setup from \cite{BM}: Let $\mathfrak{g}$ be a Lie algebra
and $\mathfrak{n}$ a Lie subalgebra of $\mathfrak{g}$. Denote by $\mathfrak{W}_{\mathfrak{n}}^{\mathfrak{g}}$
the full subcategory of the category of $\mathfrak{g}$-modules which consists of all modules $V$ for which
any $v\in V$ belongs to some $\mathfrak{n}$-submodule $X\subset V$ such that $X\in \mathfrak{W}_{\mathfrak{n}}$.
We will say that $(\mathfrak{g},\mathfrak{n})$ is a {\em Whittaker pair} provided that $\mathfrak{n}$ is
quasi-nilpotent and $\mathfrak{g}/\mathfrak{n}\in \mathfrak{W}_{\mathfrak{n}}^{\mathfrak{g}}$. Objects in
the category $\mathfrak{W}_{\mathfrak{n}}^{\mathfrak{g}}$ are called {\em Whittaker modules} for the
Whittaker pair $(\mathfrak{g},\mathfrak{n})$. All general results of \cite{BM} are true if the definition of the
category $\mathfrak{W}_{\mathfrak{n}}^{\mathfrak{g}}$ is adjusted in this way (in \cite{BM} this notation
is used for the category defined in the next paragraph).

It is also natural to consider the full subcategory $\mathfrak{F}_{\mathfrak{n}}^{\mathfrak{g}}$
of the category of $\mathfrak{g}$-modules which consists of all modules $V$ for which
any $v\in V$ belongs to some $\mathfrak{n}$-submodule $X\subset V$ such that $X\in \mathfrak{F}_{\mathfrak{n}}$.
Objects in the category $\mathfrak{F}_{\mathfrak{n}}^{\mathfrak{g}}$ are called {\em generalized Whittaker modules}
for the Whittaker pair $(\mathfrak{g},\mathfrak{n})$.

There are many quasi-nilpotent algebras $\mathfrak{n}$ for which
$\mathfrak{F}_{\mathfrak{n}}=\mathfrak{W}_{\mathfrak{n}}$. For example, this is the case if $\mathfrak{n}$
is finite dimensional (and hence nilpotent). Two other important examples are described below in
Subsection~\ref{s4.2} and \ref{s4.3}. The most important example of $\mathfrak{n}$ for which
$\mathfrak{F}_{\mathfrak{n}}\neq \mathfrak{W}_{\mathfrak{n}}$ is when $\mathfrak{n}$ is the positive part
of the standard triangular  decomposition of an affine Lie algebra $\mathfrak{g}$. Generalized Whittaker
modules in this case are studied in \cite{CGZ}.

\subsection{Whittaker setup for the Virasoro algebra}\label{s4.2}

In the case of the Virasoro algebra $\mathfrak{V}$ we have:

\begin{proposition}\label{lem61}
Let $\mathfrak{n}$ be a Lie subalgebra of $\mathfrak{V}_+^{(0)}$ such that
$\mathfrak{n}\supset \mathfrak{V}_+^{(k)}$ for some $k\in\mathbb{Z}_+$. Then:
\begin{enumerate}[$($a$)$]
\item \label{lem61.1} $\mathfrak{F}_{\mathfrak{n}}=\mathfrak{W}_{\mathfrak{n}}$.
\item \label{lem61.2} $(\mathfrak{V},\mathfrak{n})$ is Whittaker pair.
\end{enumerate}
\end{proposition}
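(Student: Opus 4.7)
The plan is to exploit that $\mathfrak{n}$ is squeezed between $\mathfrak{V}_+^{(k)}$ and $\mathfrak{V}_+^{(0)}$, so that its lower central series $\mathfrak{n}=\mathfrak{n}_0\supset\mathfrak{n}_1\supset\dots$ is controlled on both sides by the natural filtration $\mathfrak{V}_+^{(0)}\supset\mathfrak{V}_+^{(1)}\supset\dots$. The sole computational input is the Virasoro relation $[\mathtt{l}_a,\mathtt{l}_b]=(b-a)\mathtt{l}_{a+b}$ for $a,b\geq 1$ (the central term is absent since positive indices cannot sum to zero). First I would establish two containments, both by routine induction on $j$: (I) $\mathfrak{n}_j\subset \mathfrak{V}_+^{(j)}$, which follows from $[\mathfrak{V}_+^{(a)},\mathfrak{V}_+^{(b)}]\subset\mathfrak{V}_+^{(a+b+1)}$; and (II) $\mathfrak{V}_+^{(k_j)}\subset \mathfrak{n}_j$ for some sequence $k_j\to\infty$, obtained by noting that iterated brackets of pairs $\mathtt{l}_a,\mathtt{l}_b$ with $a\ne b$ (so $b-a\ne 0$) recover all $\mathtt{l}_c$ with $c$ large enough.

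Part \eqref{lem61.1} then reduces to Subsection~\ref{s2.1}. Given a finite-dimensional $\mathfrak{n}$-module $V$, its restriction to $\mathfrak{V}_+^{(k)}$ is a finite-dimensional $\mathfrak{V}_+^{(k)}$-module; the argument in Subsection~\ref{s2.1} (which applies to an arbitrary such module and produces some $\mathtt{l}_i$ in the kernel, hence all of $\mathfrak{V}_+^{(2i+1)}$) yields $m\in\mathbb{N}$ with $\mathfrak{V}_+^{(m)}V=0$. Combined with (I), this gives $\mathfrak{n}_m V\subset\mathfrak{V}_+^{(m)}V=0$, so $V\in\mathfrak{W}_{\mathfrak{n}}$.

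For part \eqref{lem61.2} I would verify quasi-nilpotence of $\mathfrak{n}$ and the membership $\mathfrak{V}/\mathfrak{n}\in \mathfrak{W}_{\mathfrak{n}}^{\mathfrak{V}}$ separately. From (I), $\bigcap_j\mathfrak{n}_j\subset \bigcap_j \mathfrak{V}_+^{(j)}=0$; from (II), $\mathfrak{V}_+^{(k_j)}\subset \mathfrak{n}_j\subset\mathfrak{n}\subset \mathfrak{V}_+^{(0)}$, and since $\mathfrak{V}_+^{(k_j)}$ has finite codimension $k_j$ in $\mathfrak{V}_+^{(0)}$ it has finite codimension in $\mathfrak{n}$, whence $\mathfrak{n}_j$ does as well. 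For the quotient condition, the decisive feature is that every index appearing in $\mathfrak{n}$ is strictly positive: the $\mathfrak{n}$-submodule of $\mathfrak{V}/\mathfrak{n}$ generated by the class $\overline{\mathtt{l}_j}$ is therefore contained in the finite-dimensional span of $\overline{\mathtt{l}_j},\overline{\mathtt{l}_{j+1}},\dots,\overline{\mathtt{l}_k}$ together with $\overline{\mathtt{c}}$ (the latter arising only from $[\mathtt{l}_a,\mathtt{l}_{-a}]$), while $\overline{\mathtt{c}}$ is $\mathfrak{n}$-fixed. Hence every element of $\mathfrak{V}/\mathfrak{n}$ sits inside a finite-dimensional $\mathfrak{n}$-submodule, which by \eqref{lem61.1} is automatically in $\mathfrak{W}_{\mathfrak{n}}$.

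The only delicate point is verifying (II): one must check that iterated brackets in $[\mathfrak{V}_+^{(k)},\mathfrak{V}_+^{(k)}]$ do not collapse because of coefficient coincidences. But $b-a=0$ only when $a=b$, and for any sufficiently large target index $c$ there are many pairs $(a,b)$ with $a\ne b$ and $a+b=c$, so the induction goes through cleanly and produces an explicit linear growth rate for $k_j$.
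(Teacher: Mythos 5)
Your proposal is correct and follows essentially the same route as the paper: for~\eqref{lem61.1} the paper also just invokes the first paragraph of Subsection~\ref{s2.1} (restriction to $\mathfrak{V}_+^{(k)}$, kernel contains some $\mathtt{l}_i$ hence some $\mathfrak{V}_+^{(m)}$, then the lower central series inclusion $\mathfrak{n}_j\subset\mathfrak{V}_+^{(j)}$), and for~\eqref{lem61.2} the paper simply says it follows from the definitions, which is exactly what you spell out via your containments (I) and (II) together with the observation that $\mathrm{ad}(\mathfrak{n})$ only raises indices in $\mathfrak{V}/\mathfrak{n}$.
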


\begin{proof}
Claim \eqref{lem61.1} follows from the first paragraph of Subsection~\ref{s2.1}.
Claim \eqref{lem61.2} follows from definitions.
\end{proof}

From Proposition~\ref{lem61}\eqref{lem61.1} it follows that for any $\mathfrak{n}$ as in Proposition~\ref{lem61}
any simple module in $\mathfrak{W}_{\mathfrak{n}}^{\mathfrak{g}}$ is contained in the list provided by
Theorem~\ref{thmmain2}.

\subsection{Whittaker setup for the Witt algebras}\label{s4.3}

For $n\in\mathbb{N}$ denote by $\mathfrak{w}_n$ the Lie algebra of all derivations of the polynomial algebra
$\mathbb{C}[x_1,x_2,\dots,x_n]$ (the classical {\em Witt algebra}, which is a Lie algebra of Cartan type).
Set $\mathbf{N}:=\{1,2,\dots,n\}$ and for $i\in \mathbf{N}$ let $\partial_i$ denote the derivation
$\frac{\partial}{\partial x_i}$. For $i\in\mathbf{N}$ and
$\mathbf{m}=(m_1,m_2,\dots,m_n)\in\mathbb{Z}_+^n$ the elements
$D_i(\mathbf{m})=x_1^{m_1}x_2^{m_2}\cdots x_n^{m_n}\partial_i$ form the standard basis of
$\mathfrak{w}_n$. The linear span of $x_i\partial_i$, $i\in\mathbf{N}$, is the standard Cartan
subalgebra $\mathfrak{h}$ of $\mathfrak{w}_n$ and the linear span $\mathfrak{a}$ of $x_i\partial_j$,
$i,j\in\mathbf{N}$, is a copy of $\mathfrak{gl}_n$. Let
\begin{displaymath}
\mathfrak{a}=\mathfrak{n}_-^{\mathfrak{a}}\oplus \mathfrak{h}\oplus \mathfrak{n}_+^{\mathfrak{a}}
\end{displaymath}
be some triangular decomposition of $\mathfrak{a}$. Denote by $\mathfrak{n}_{+}$ the linear span of
$\mathfrak{n}_+^{\mathfrak{a}}$ and $\partial_i$, $i\in\mathbf{N}$. Denote by $\mathfrak{n}_{}$
the linear span of $\mathfrak{n}_-^{\mathfrak{a}}$ and all the elements $D_i(\mathbf{m})$ which are contained in
neither $\mathfrak{a}$ nor $\mathfrak{n}_+$. Then we have a decomposition
\begin{displaymath}
\mathfrak{w}_n=\mathfrak{n}_-\oplus \mathfrak{h}\oplus \mathfrak{n}_+
\end{displaymath}
into a direct sum of subalgebras and both $(\mathfrak{w}_n,\mathfrak{n}_-)$ and $(\mathfrak{w}_n,\mathfrak{n}_+)$
are Whittaker pairs (see \cite[5.1]{BM}). The algebra $\mathfrak{n}_+$ is nilpotent finite dimensional, while
$\mathfrak{n}_-$ is neither nor (but it is quasi-nilpotent).

\begin{proposition}\label{prop62}
For $\mathfrak{n}=\mathfrak{n}_-$ we have  $\mathfrak{F}_{\mathfrak{n}}=\mathfrak{W}_{\mathfrak{n}}$.
\end{proposition}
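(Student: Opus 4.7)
Let $V$ be a finite-dimensional $\mathfrak{n}_-$-module with representation $\rho$ and put $\mathfrak{i}:=\ker\rho$, a cofinite ideal in $\mathfrak{n}_-$. The goal is to exhibit $i\in\mathbb{N}$ with $(\mathfrak{n}_-)_iV=0$, equivalently $(\mathfrak{n}_-)_i\subset\mathfrak{i}$. The plan exploits the Witt grading $\mathfrak{n}_-=\mathfrak{n}_-^{\mathfrak{a}}\oplus\bigoplus_{d\geq 1}\mathfrak{w}_n^{(d)}$ satisfying $[\mathfrak{n}_-^{(d_1)},\mathfrak{n}_-^{(d_2)}]\subset\mathfrak{n}_-^{(d_1+d_2)}$, together with the descending filtration $F^d:=\bigoplus_{e\geq d}\mathfrak{w}_n^{(e)}$ of cofinite ideals; the fact that $(\mathfrak{w}_n,\mathfrak{n}_-)$ is a Whittaker pair gives in particular the quasi-nilpotence $\bigcap_i(\mathfrak{n}_-)_i=0$.

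The crux is to show $F^{d_0}\subset\mathfrak{i}$ for some $d_0$. I would prove this in two substeps, mimicking the strategy of Subsection~\ref{s2.1}. First, a ``spread-reduction'' argument produces a nonzero $X\in\mathfrak{i}\cap\mathfrak{w}_n^{(d_1)}$ for some $d_1\geq 1$: starting from any nonzero $Y\in\mathfrak{i}$ decomposed into its homogeneous parts, one brackets $Y$ either with elements of $\mathfrak{n}_-^{\mathfrak{a}}$ (which preserve degrees but can annihilate specific components) or with elements of $\mathfrak{w}_n^{(e)}$ (which shift all degrees by $e$) to strictly reduce the difference between the highest and lowest degrees in the homogeneous decomposition, iterating until one lands in a single $\mathfrak{w}_n^{(d_1)}$. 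Second, given such an $X$, I use the explicit Witt commutation
\begin{displaymath}
[x^{\mathbf{a}}\partial_j,x^{\mathbf{b}}\partial_k]=b_j\, x^{\mathbf{a}+\mathbf{b}-\varepsilon_j}\partial_k-a_k\, x^{\mathbf{a}+\mathbf{b}-\varepsilon_k}\partial_j
\end{displaymath}
together with the iterated $\mathfrak{n}_-^{\mathfrak{a}}\subset\mathfrak{gl}_n$-action (spreading $X$ across its $\mathfrak{gl}_n$-orbit in $\mathfrak{w}_n^{(d_1)}$) to show that the $\mathfrak{n}_-$-ideal generated by $X$ fills up all of $\mathfrak{w}_n^{(d)}$ for every $d\geq d_0$, giving $F^{d_0}\subset\mathfrak{i}$.

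Once $F^{d_0}\subset\mathfrak{i}$, the conclusion comes from quasi-nilpotence: because the Witt grading is respected by Lie brackets, each $(\mathfrak{n}_-)_i$ is a graded subspace, and its degree-$d$ component is a descending sequence in the finite-dimensional space $\mathfrak{n}_-^{(d)}$. Since $\bigcap_i(\mathfrak{n}_-)_i=0$, this descending sequence stabilizes at zero for every $d$, so for each of the finitely many $d$ with $0\leq d<d_0$ there is some $i_d$ after which the degree-$d$ component of $(\mathfrak{n}_-)_i$ vanishes. Taking $i_0:=\max\{i_d:0\leq d<d_0\}$ gives $(\mathfrak{n}_-)_i\subset F^{d_0}\subset\mathfrak{i}$ for every $i\geq i_0$, which completes the proof.

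The principal obstacle is the second substep of the crux: showing that the $\mathfrak{n}_-$-ideal generated by any single nonzero homogeneous $X\in\mathfrak{w}_n^{(d_1)}$ contains all sufficiently high-degree homogeneous pieces. This is a hands-on Witt-algebra computation, complicated by degenerate cases (such as $X=x_i^{d_1+1}\partial_i$, whose $\mathfrak{gl}_n$-stabilizer is relatively large), which require a careful combination of iterated $\mathfrak{w}_n^{(1)}$-brackets with the $\mathfrak{n}_-^{\mathfrak{a}}$-action to reach every basis monomial of sufficiently high degree.
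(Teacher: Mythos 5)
Your overall architecture is right: reduce to showing a cofinite ideal $\mathfrak{i}$ contains the whole tail $F^{d_0}$ of the grading, then use gradedness of the lower central series together with quasi-nilpotence to force $(\mathfrak{n}_-)_i\subset F^{d_0}\subset\mathfrak{i}$. Your concluding paragraph is correct and actually spells out a step the paper leaves implicit, so that part is a genuine plus. But the crux, establishing $F^{d_0}\subset\mathfrak{i}$, is where you diverge from the paper and where there is a real gap.

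The paper does not attempt a rank-$n$ ``spread reduction.'' It observes that for each fixed $i$ the elements $x_i^s\partial_i$ ($s>0$) span a copy of $\mathfrak{V}_+$ inside $\mathfrak{w}_n$, applies Proposition~\ref{lem61} (the rank-one statement, whose proof is the Virasoro spread-reduction from Subsection~\ref{s2.1}) to conclude $x_i^s\partial_i\in\mathfrak{i}$ for $s\gg 0$, then brackets with $x_i^2\partial_j$ to get $x_j^s\partial_i\in\mathfrak{i}$ for $s\gg 0$, and finally uses the single commutator identity $[x^{\mathbf{a}}\partial_j,x^{\mathbf{b}}\partial_k]=b_j\,x^{\mathbf{a}+\mathbf{b}-\varepsilon_j}\partial_k-a_k\,x^{\mathbf{a}+\mathbf{b}-\varepsilon_k}\partial_j$ in two easy cases to sweep up every $D_i(\mathbf{m})$ with some $m_j$ large. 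Both of your substeps are thereby avoided.

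Your first substep (reducing the spread of a general $Y\in\mathfrak{i}$) is not justified as stated. The Virasoro argument in Subsection~\ref{s2.1} strictly shortens the spread because $[\mathtt{l}_s,\mathtt{l}_s]=0$ automatically kills the bottom component while $[\mathtt{l}_s,\mathtt{l}_{s+m}]\neq 0$ keeps the top one; this relies crucially on each homogeneous piece being one-dimensional. In $\mathfrak{w}_n$ with $n\geq 2$ the pieces $\mathfrak{w}_n^{(d)}$ are multi-dimensional, and bracketing with a homogeneous $Z$ has no reason to annihilate the extreme component of $Y$ without annihilating the whole thing; for instance, if every homogeneous component of $Y$ is an $\ad(\mathfrak{n}_-^{\mathfrak{a}})$-lowest weight vector, then bracketing with $\mathfrak{n}_-^{\mathfrak{a}}$ kills all of $Y$, and you are left with the same problem for brackets with $\mathfrak{w}_n^{(e)}$. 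You would need a genuine new argument here. Your second substep you yourself flag as the ``principal obstacle,'' and indeed filling up all high-degree pieces from the $\mathfrak{n}_-$-ideal generated by a single homogeneous $X$ is delicate (your example $X=x_i^{d_1+1}\partial_i$ really is degenerate). The moral is that doing the reduction one rank-one subalgebra at a time, as the paper does, turns both hard substeps into trivial ones; I would recommend restructuring your crux around that idea.
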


\begin{proof}
Note that the claim of Proposition~\ref{lem61} in the case  of the algebra $\mathfrak{V}_+^{(0)}$ is the claim of
Proposition~\ref{prop62} in the case of the algebra  $\mathfrak{w}_1$. Hence we are left to
consider the case $n>1$.

Let $V$ be a finite dimensional $\mathfrak{n}$-module and $\mathfrak{i}\subset \mathfrak{n}$ be the kernel of the
representation map. To prove our proposition it is enough to show that $\mathfrak{i}$ contains all but finitely many
elements of the standard basis. For a fixed $i\in\mathbf{N}$, the elements $x_i^{s}\partial_i$,
$s>0$, form a copy of $\mathfrak{V}_+$ inside $\mathfrak{w}_n$. Hence Proposition~\ref{lem61} implies that
all but finitely many such elements are in $\mathfrak{i}$. Commuting these elements with $x^2_i\partial_j$, where
$j\neq i$, we get that $\mathfrak{i}$ contains all but finitely many elements of the form
$x_j^{s}\partial_i$. Hence there is $N\in\mathbb{N}$ such that $\mathfrak{i}$ contains all
$x_j^{s}\partial_i$ where $s>N$.

Now we claim that for any $i\in\mathbf{N}$ and any $\mathbf{m}$ such that $m_j>N$ for some $j$ we have
$D_i(\mathbf{m})\in \mathfrak{i}$. Write $\mathbf{m}=\mathbf{m}'+\mathbf{m}''$,
where $m'_j=m_j$ while $m'_s=0$ for all $s\neq j$. Assume first that $i=j$. Then we have
$D_i(\mathbf{m}'+\varepsilon_i)\in \mathfrak{i}$ by the  previous paragraph and the
commutator $[D_i(\mathbf{m}'+\varepsilon_i),D_i(\mathbf{m}'')]$ equals $D_i(\mathbf{m})$ up to a nonzero scalar.
In the case $i\neq j$ we have $D_j(\mathbf{m}')\in \mathfrak{i}$ by the
previous paragraph and the commutator $[D_j(\mathbf{m}'),D_i(\mathbf{m}''+\varepsilon_j)]$
equals $D_i(\mathbf{m})$ up to a nonzero scalar. The claim of the proposition follows.
\end{proof}

\vspace{2mm}

\noindent
V.M.: Department of Mathematics, Uppsala University,
Box 480, SE-751 06, Uppsala, Sweden; e-mail: {\tt mazor\symbol{64}math.uu.se}
\vspace{1mm}

\noindent K.Z.: Department of Mathematics, Wilfrid Laurier
University, Waterloo, Ontario, N2L 3C5, Canada; and College of Mathematics and
Information Science, Hebei Normal (Teachers) University, Shijiazhuang 050016,
Hebei, P. R. China. e-mail:  {\tt kzhao\symbol{64}wlu.ca}

\end{document}